\newcommand{\fp}{\mathop{\scalebox{1.2}{\raisebox{0ex}{$\ast$}}}}
\newcommand{\fpGammaiI}{\fp_i \Gamma_i}
\newtheorem{thm}{Theorem}[section]
\newtheorem{thm*}{Theorem}
\newtheorem{cor}[thm]{Corollary}
\newtheorem{lemma}[thm]{Lemma}
\newtheorem{remark}[thm]{Remark}
\newtheorem{lemma*}[thm*]{Lemma}
\newtheorem{prop}[thm]{Proposition}
\newtheorem{question}[thm]{Question}
\newtheorem{defn}[thm]{Definition}
\DeclareMathOperator{\dom}{dom}
\DeclareMathOperator{\ran}{ran}
\newcommand{\from}{\colon}
\newcommand{\baire}{\N^{\N}}
\newcommand{\N}{\mathbb{N}}
\newcommand{\R}{\mathbb{R}}
\newcommand{\F}{\mathbb{F}}
\newcommand{\Z}{\mathbb{Z}}
\renewcommand{\subset}{\subseteq}
\newcommand{\union}{\cup}
\newcommand{\bigunion}{\bigcup}
\newcommand{\inters}{\cap}
\newcommand{\<}{\langle}
\renewcommand{\>}{\rangle}
\DeclareMathOperator{\AD}{AD}
\DeclareMathOperator{\AC}{AC}
\newcommand{\define}[1]{\textbf{#1}} 
\title{Hyperfiniteness and Borel combinatorics}
\author[Conley]{Clinton T. Conley}
\author[Jackson]{Steve Jackson}
\author[Marks]{Andrew S. Marks}
\author[Seward]{Brandon Seward}
\author[Tucker-Drob]{Robin D. Tucker-Drob}
\thanks{This paper was produced as the result of a SQuaRE workshop at the
American Institute of Mathematics. The first author is supported by NSF
grant DMS-1500906. The
third author is supported by NSF grant DMS-1500974. The fourth author is
partially supported by ERC grant 306494 and by Simons Foundation grant
328027 (P.I. Tim Austin). The fifth author is supported by NSF grant
DMS-1600904.}
\DeclareMathOperator{\Free}{Free}
\date{\today}
\begin{document}

\begin{abstract}
We study the relationship between hyperfiniteness and problems in Borel
graph combinatorics by adapting game-theoretic techniques introduced by
Marks to the hyperfinite setting. We compute the possible Borel chromatic
numbers and edge chromatic numbers of bounded degree acyclic hyperfinite
Borel graphs and use this to answer a question of Kechris and Marks about
the relationship between Borel chromatic number and measure chromatic
number. We also show that for every $d > 1$ there is a $d$-regular acyclic
hyperfinite Borel bipartite graph with no Borel perfect matching. These
techniques also give examples of hyperfinite bounded degree Borel graphs
for which the Borel local lemma fails, in contrast to the recent results of
Cs\'oka, Grabowski, M\'ath\'e, Pikhurko, and Tyros. 

Related to the Borel Ruziewicz problem, we show there is a continuous
paradoxical action of $(\Z/2\Z)^{*3}$ on a Polish space that admits a
finitely additive invariant Borel probability measure, but admits no
countably additive invariant Borel probability measure. In the context of studying
ultrafilters on the quotient space of equivalence relations under $\AD$, 
we also construct an ultrafilter $U$ on the
quotient of $E_0$ which has surprising complexity. In particular, Martin's measure is Rudin-Kiesler reducible to
$U$. 

We end with a problem about whether every hyperfinite bounded degree Borel
graph has a witness to its hyperfiniteness which is uniformly bounded below
in size. 
\end{abstract}

\maketitle

\section{Introduction}

In this paper, we investigate the relationship between hyperfiniteness and
problems in Borel graph combinatorics. Recall that a (simple) \define{Borel
graph} $G$ on a standard Borel space $X$ is a graph whose vertex set is $X$
and whose (symmetric irreflexive) edge relation is Borel. $G$ is said to be
\define{hyperfinite} if it can be written as an increasing union of Borel
graphs with finite connected components. Hyperfinite graphs can be thought
of as the simplest graphs that can display nonclassical behavior in the setting
of Borel graph combinatorics. This is made precise by the
Glimm-Effros dichotomy.

A fundamental theorem of Kechris, Solecki, and Todorcevic {\cite[Proposition
4.6]{KST}} states that every Borel graph $G$ of degree at most $d$ has Borel
chromatic number $\chi_B(G) \leq d +1$, where the \define{Borel chromatic
number} $\chi_B(G)$ of $G$ is the least cardinality of a Polish space $Y$
so that there is a Borel $Y$-coloring of $G$. This bound is optimal even
for acyclic graphs since for every $d \geq 1$ and $k \in \{2,\ldots,
d+1\}$, there is an acyclic $d$-regular Borel graph with $\chi_B(G) = k$ by
\cite{M1}. However, the graphs used to obtain this result are not
hyperfinite, and Conley and Miller have asked whether every acyclic bounded
degree hyperfinite Borel graph $G$ has $\chi_B(G) \leq 3$ {\cite[Problem
5.17]{KMa}}. We answer this question in the negative. Essentially, we
reprove all of the combinatorial results from \cite{M1} about Borel
colorings, edge colorings, matchings, etc. for Borel graphs with the
additional property of hyperfiniteness. Hence, among bounded degree Borel
graphs, even hyperfinite graphs can achieve the maximum possible
combinatorial complexity as measured by how hard they are to color and
match in a Borel way. This is in contrast to the measure-theoretic context,
where hyperfinite bounded degree Borel graphs are known to be much simpler
to measurably color than arbitrary bounded degree Borel graphs.
For instance, every acyclic hyperfinite bounded degree graph on a standard
probability space $(X,\mu)$ has a $\mu$-measurable $3$-coloring
{\cite[Theorem A]{CM}}. 

To prove these results, we associate to each countable discrete group
$\Gamma$ a certain hyperfinite Borel action of $\Gamma$. 
We then show that an analogue of the central lemma of \cite{M1} is true for
these actions. Recall that if a group $\Gamma$ acts on a set $X$, the free
part of this action is $\Free(X) = \{x \in X : \forall \gamma \in \Gamma(
\gamma \neq 1 \implies \gamma \cdot x \neq x)\}$. 

\begin{defn}\label{action_defn}
Suppose $\Gamma$ is a countable discrete group. Then $\Gamma$ acts on 
$\Gamma^\Gamma$ by 
\[(\gamma \cdot x)(\delta) = \gamma x(\gamma^{-1}\delta)\]
for every $x \in \Gamma^\Gamma$ and $\gamma,\delta \in \Gamma$. 
Let 
$H(\Gamma^\Gamma)$ be the set of $x \in \Free(\Gamma^\Gamma)$ such that $x$ is a bijection and the
permutation $x$ induces on $\Gamma$ has one orbit. 
Let $E_\Gamma$ be the orbit equivalence relation of this action of $\Gamma$
on $H(\Gamma^\Gamma)$. Let $w
\from H(\Gamma^\Gamma) \to H(\Gamma^\Gamma)$ be the Borel function defined by 
$w(x) = (x(1))^{-1} \cdot x$. 
\end{defn}

Note that this action, which we use throughout the paper, is \emph{not} the
standard shift action. It is a combination of the shift action, and 
pointwise multiplication.

An easy calculation shows that $w^{n}(x) = (x^{n}(1))^{-1} \cdot x$ for every $n
\in \Z$.
Thus, since the permutation $x$ induces on $\Gamma$ has a single
orbit, $w$ generates $E_\Gamma$, which is therefore hyperfinite 
{\cite[Theorem 6.6]{KMi}}:

\begin{prop}
  $E_\Gamma$ is hyperfinite. \qed
\end{prop}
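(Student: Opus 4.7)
The statement is essentially already justified in the paragraph immediately preceding it, so my plan is to organize that reasoning into a self-contained proof. The key fact I want to establish is that $w$ is a Borel automorphism of $H(\Gamma^\Gamma)$ whose $\Z$-orbit equivalence relation coincides with $E_\Gamma$; once this is in place, hyperfiniteness follows from the Slaman--Steel/Weiss theorem (cited as \cite[Theorem 6.6]{KMi}) that every orbit equivalence relation of a single Borel automorphism is hyperfinite.

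First I would verify that $w$ really does map $H(\Gamma^\Gamma)$ into itself and has a Borel inverse. Because $H(\Gamma^\Gamma)$ is invariant under the $\Gamma$-action (being a bijection and having a single-orbit permutation are both $\Gamma$-invariant conditions), and $w(x)$ is simply the translate $(x(1))^{-1}\cdot x$, we have $w(x)\in H(\Gamma^\Gamma)$. Borelness is immediate from the definitions. To exhibit the inverse I would establish, by induction on $n\in\Z$, the formula $w^n(x)=(x^n(1))^{-1}\cdot x$ asserted in the excerpt. The inductive step uses the computation $w^n(x)(1)=(x^n(1))^{-1}\,x\bigl(x^n(1)\bigr)=(x^n(1))^{-1}\,x^{n+1}(1)$, after which
\[
w^{n+1}(x)=\bigl(w^n(x)(1)\bigr)^{-1}\cdot w^n(x)=(x^{n+1}(1))^{-1}\cdot x,
\]
completing the induction; in particular $w^{-1}$ is given by the same formula with $n=-1$, so $w$ is a Borel automorphism.

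Next I would check that the $w$-orbit of any $x$ equals its $E_\Gamma$-class. The containment $\subseteq$ is immediate from the formula just derived. For the reverse containment, suppose $\gamma\cdot x\in H(\Gamma^\Gamma)$ and $x\in H(\Gamma^\Gamma)$; I want $\gamma\cdot x=w^n(x)$ for some $n\in\Z$, which by the formula is equivalent to $\gamma^{-1}=x^n(1)$ for some $n$. This is precisely where the hypothesis that the permutation induced by $x$ on $\Gamma$ has a single orbit is used: that hypothesis gives $\{x^n(1):n\in\Z\}=\Gamma$, so a suitable $n$ exists.

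Having identified $E_\Gamma$ as the orbit equivalence relation of the single Borel automorphism $w$ on the standard Borel space $H(\Gamma^\Gamma)$, I would conclude by invoking \cite[Theorem 6.6]{KMi}, which gives hyperfiniteness of any such $\Z$-orbit equivalence relation. There is no real obstacle here; the only thing requiring any care is the single-orbit hypothesis in the definition of $H(\Gamma^\Gamma)$, which is exactly what is needed to pass from inclusion of $w$-orbits in $E_\Gamma$-classes to equality.
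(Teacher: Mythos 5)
Your proposal is correct and follows exactly the paper's argument: establish the formula $w^{n}(x)=(x^{n}(1))^{-1}\cdot x$, use the single-orbit hypothesis to see that $w$ generates $E_\Gamma$, and then apply \cite[Theorem 6.6]{KMi}. The paper leaves these verifications as the ``easy calculation'' in the preceding paragraph, so you have simply filled in the same route in more detail.
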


We prove the following version of {\cite[Lemma 2.1]{M1}} for these
hyperfinite actions:

\begin{lemma}\label{hyp_lemma}
  Suppose $\Gamma$ and $\Delta$ are countable groups and $A \subset
  H((\Gamma * \Delta)^{\Gamma * \Delta})$. Then 
  \begin{enumerate}
  \item There is an injective Borel 
  $\Gamma$-equivariant function $f \from H({\Gamma}^{\Gamma}) \to
  H((\Gamma * \Delta)^{\Gamma * \Delta})$ with $\ran(f) \subset A$, or
  \item There is an injective Borel 
  $\Delta$-equivariant function $f \from H({\Delta}^{\Delta}) \to
  H((\Gamma * \Delta)^{\Gamma * \Delta})$ with $\ran(f) \inters A =
  \emptyset$. 
  \end{enumerate}
\end{lemma}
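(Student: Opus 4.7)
The plan is to adapt the game-theoretic dichotomy of \cite[Lemma 2.1]{M1} to the twisted hyperfinite action of Definition \ref{action_defn}. The key will be to design a Borel game whose two possible winners correspond to the two alternatives of the lemma, and whose Borel determinacy then yields the dichotomy.

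First, I would set up a Borel game parameterized by an input $x \in H(\Gamma^\Gamma) \sqcup H(\Delta^\Delta)$ in which the two players alternately construct a function $y \in (\Gamma * \Delta)^{\Gamma * \Delta}$. The positions of $\Gamma * \Delta$ are partitioned between the two players in a manner analogous to Marks's game, and the input $x$ is used to prescribe $y$ on the natural copy of $\Gamma$ (resp.\ $\Delta$) inside $\Gamma * \Delta$ via the inclusion. Player I wins if the resulting $y$ lies in $H((\Gamma*\Delta)^{\Gamma*\Delta}) \cap A$ and Player II wins if $y \in H((\Gamma*\Delta)^{\Gamma*\Delta}) \setminus A$; in the exceptional event that $y \notin H((\Gamma*\Delta)^{\Gamma*\Delta})$, the rules assign a loss to whichever player first deviated from the structural bookkeeping ensuring $y$ is a bijection with a single orbit lying in the free part. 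By Borel determinacy one player has a winning strategy. A winning strategy for Player I, applied uniformly in $x \in H(\Gamma^\Gamma)$, then yields a Borel function $f \from H(\Gamma^\Gamma) \to H((\Gamma*\Delta)^{\Gamma*\Delta})$ with $\ran(f) \subset A$. We would choose the partition of positions and the encoding of $x$ so that $f$ is automatically $\Gamma$-equivariant -- because translation of the input by $\gamma \in \Gamma$ corresponds to a positional shift on the move sequence that the strategy respects -- and injective, because $x$ is recoverable from $f(x)$ via the prescribed positions. The case of Player II is symmetric.

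The main obstacle will be arranging the game so that the alternative of $y \notin H((\Gamma*\Delta)^{\Gamma*\Delta})$ is effectively absorbed into the winning conditions: whichever player has a winning strategy must be able to force $y$ to be a bijection of $\Gamma * \Delta$ whose induced permutation has a single orbit and which is in $\Free((\Gamma * \Delta)^{\Gamma * \Delta})$. In Marks's original argument the codomain was a full shift space and no such structural condition was required; here each move must extend a partial bijection, cover $\Gamma * \Delta$ in the limit, grow a single orbit, and avoid periodicities, all while leaving each player enough flexibility to realize any strategy. I anticipate a joint scheduling in which the players announce finite chunks of $y$ to be filled in later rounds, with bijectivity and orbit growth enforced by a standard book-keeping rule requiring each round to extend the current partial orbit by one new element. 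Verifying that this bookkeeping can always be honoured by either player regardless of the opponent, and that the resulting winning strategy translates back into a genuinely equivariant Borel injection into $H((\Gamma*\Delta)^{\Gamma*\Delta})$ rather than merely into $(\Gamma*\Delta)^{\Gamma*\Delta}$, is the technical crux of the argument.
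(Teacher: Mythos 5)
Your overall skeleton (a single-orbit-building game split between $\Gamma$-words and $\Delta$-words, blame-assignment for structural failures, Borel determinacy, then converting a winning strategy into an equivariant Borel injection) matches the paper's strategy, but the specific mechanism you propose for equivariance and injectivity contains a genuine gap, and it is exactly at the point you flag as the "technical crux." You want the input $x$ to prescribe $y$ on the natural copy of $\Gamma$ (resp.\ $\Delta$) inside $\Gamma * \Delta$, so that $x$ is recoverable from the prescribed positions and translation of $x$ corresponds to a positional shift. That is the mechanism from Marks's original lemma, and it is incompatible with the present target space: if $x \in H(\Delta^\Delta)$ and $y \restriction \Delta = x$, then $y$ maps the subgroup $\Delta$ bijectively onto itself, so $\Delta$ is invariant under the permutation induced by $y$ and $y$ cannot have a single orbit on $\Gamma * \Delta$ once $\Gamma$ is nontrivial; hence $y \notin H((\Gamma * \Delta)^{\Gamma * \Delta})$ no matter how the rest of the play goes. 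The paper never copies $x$ into $f(x)$. Instead it fixes one game (not a family parameterized by $x$ -- your parameterization also raises the problem that the winning player and strategy could vary with $x$ with no Borel selection), applies determinacy once, and then for each $x$ plays countably many coordinated instances of that game, one for each $\delta \in \Delta$, whose outcomes are the translates $\delta^{-1} \cdot f(x)$. The genuinely new ingredient, absent from your sketch, is that the single-orbit requirement forces someone to decide when and how the one-orbit partial injections produced in the different instances get concatenated; the paper does this in the role of player II using a witness $E_0 \subset E_1 \subset \cdots$ to the hyperfiniteness of $E_\Delta$, arranged so that each $E_n$-class is an interval in the ordering $<_\Delta$ induced by $w$. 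Injectivity is then obtained not by recovering $x$ from fixed coordinates but by enforcing that $x <_\Delta x'$ if and only if $f(x) <_{\Gamma * \Delta} f(x')$, and equivariance comes from the coherence of the game instances across translates of $x$.

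A second, smaller gap: you propose to absorb the failure $y \notin H((\Gamma*\Delta)^{\Gamma*\Delta})$ entirely into bookkeeping rules, including "avoiding periodicities." Freeness is not a condition that can be pinned on a finite stage of the play or on one player's move, so blame-assignment alone cannot enforce it. The paper only asks the rules and payoff to force the outcome into an auxiliary set $Y$ defined by minimal-length witnesses (a weakening of freeness for which blame can be assigned to the player responsible for $\Gamma$-words or for $\Delta$-words), and then upgrades from $Y$ to $\Free((\Gamma*\Delta)^{\Gamma*\Delta})$ by adding to the payoff a Borel set $C \subset Y \setminus \Free((\Gamma*\Delta)^{\Gamma*\Delta})$, taken from {\cite[Lemma 2.3]{M1}}, which meets every $E_\Delta$-class and whose complement meets every $E_\Gamma$-class on $Y \setminus \Free((\Gamma*\Delta)^{\Gamma*\Delta})$; playing the game with payoff $A \union C$ is what guarantees that the winner's outcomes land in the free part, hence in $H((\Gamma*\Delta)^{\Gamma*\Delta})$. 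Without the set $C$ and without the hyperfiniteness-driven stitching of orbits, the strategy you describe would not produce a well-defined equivariant Borel injection into $H((\Gamma*\Delta)^{\Gamma*\Delta})$.
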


By applying this lemma the same way as \cite{M1} we obtain hyperfinite
versions of all the theorems in that paper, as illustrated in
Theorem~\ref{comb_thm}. Recall that a graph is said to
be \define{$d$-regular} if all of its vertices have degree $d$. 

\begin{thm}\mbox{ }\label{comb_thm}
\begin{enumerate}
  \item For every $d \geq 1$ and every $k \in \{2, \ldots, d+1\}$ there is
  a $d$-regular acyclic hyperfinite Borel graph $G$ with $\chi_B(G) = k$.

  \item For every $d \geq 1$ and every $k \in \{d, \ldots, 2d - 1\}$ there
  is a $d$-regular acyclic Borel bipartite hyperfinite graph $G$ such that
  $\chi'_B(G) = k$.

  \item For every $d > 1$ there exists a $d$-regular acyclic hyperfinite
  Borel bipartite graph with no Borel perfect matching.

\end{enumerate}
\end{thm}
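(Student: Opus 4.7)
The plan is to follow the strategy of \cite{M1} essentially verbatim, with the underlying Borel space of the graph replaced by $H(\Gamma^\Gamma)$ for an appropriately chosen countable group $\Gamma$. For each part, write $\Gamma = \Gamma_1 * \cdots * \Gamma_n$ as a free product and choose finite symmetric generating sets $S_i$ for each $\Gamma_i$; then let $G$ be the Schreier graph of the action $\Gamma \actson H(\Gamma^\Gamma)$ with respect to $S_1 \union \cdots \union S_n$. Hyperfiniteness is automatic: the connected component equivalence relation of $G$ is a subrelation of $E_\Gamma$, which is hyperfinite by the preceding proposition. Acyclicity is achieved by taking each $\Gamma_i$ to be $\Z$ or $\Z/2\Z$ with its standard generating set, so each $\Gamma_i$ contributes a tree in the Schreier graph and the free-product structure together with freeness of the action rules out cycles. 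By varying the $|S_i|$ one arranges $G$ to be $d$-regular; taking all $S_i$ to consist of involutions produces a bipartite $G$.

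The upper bounds $\chi_B(G) \leq d+1$ and $\chi'_B(G) \leq 2d-1$ are the standard Kechris--Solecki--Todorcevic bound and its edge-coloring counterpart from \cite{M1}. The lower bounds in (1) and (2) and the non-existence of a Borel perfect matching in (3) follow by iterating Lemma~\ref{hyp_lemma} in the manner of \cite{M1}. For instance, in (1), with $n = k$, suppose toward contradiction that a Borel $(k-1)$-coloring $c \from H(\Gamma^\Gamma) \to \{1, \ldots, k-1\}$ exists. Apply Lemma~\ref{hyp_lemma} iteratively: at stage $i$, take $A$ to be the pullback of $c^{-1}(i)$ along the equivariant maps already constructed. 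If alternative (1) ever holds at stage $i$, then the resulting $\Gamma_i$-equivariant $f$ maps two $G$-adjacent vertices $x$ and $s \cdot x$ (for some $s \in S_i$) to $f(x)$ and $s \cdot f(x)$, both in color class $i$; but these are $G$-adjacent, contradicting that $c$ is a proper coloring. Hence alternative (2) occurs at each stage, and composing the $k-1$ resulting equivariant maps yields an injective Borel embedding of $H(\Gamma_k^{\Gamma_k})$ into $H(\Gamma^\Gamma) \setminus \bigunion_{i<k} c^{-1}(i) = \emptyset$, a contradiction. Parts (2) and (3) proceed by analogous iterations, applied to the edge-color classes for (2) and to the Borel functions recording the unique matching partner at each vertex for (3).

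The main obstacle I anticipate is the combinatorial bookkeeping: choosing the free product decomposition and the generating sets so that $G$ simultaneously has the required degree, acyclicity, and bipartiteness, and so that each application of Lemma~\ref{hyp_lemma} neatly corresponds to one color class or one matching class. This case analysis is already carried out in \cite{M1}, and since Lemma~\ref{hyp_lemma} has the same logical shape as \cite[Lemma 2.1]{M1}, the translation to the hyperfinite setting adds no genuinely new combinatorial content once Lemma~\ref{hyp_lemma} itself is in hand.
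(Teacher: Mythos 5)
Your proposal is correct and follows essentially the same route as the paper, whose entire proof of Theorem~\ref{comb_thm} is precisely this: repeat the constructions and arguments of Theorems 1.3, 1.4, and 1.5 of \cite{M1} verbatim, replacing $\Free(\N^\Gamma)$ by $H(\Gamma^\Gamma)$ and {\cite[Lemma 2.1]{M1}} by Lemma~\ref{hyp_lemma} (or its iterated form, Lemma~\ref{ctbl_version}), with hyperfiniteness coming for free because the connectedness relation of the resulting Schreier graphs is contained in the hyperfinite relation $E_\Gamma$. Your observations about acyclicity from freeness of the action, the KST upper bounds, and deferring the degree/intermediate-value bookkeeping to \cite{M1} are exactly how the paper intends the transfer to be read.
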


Part (1) of this theorem answers a question of Conley and Miller
{\cite[Question 5.17]{KMa}}.

By combining part (1) of Theorem~\ref{comb_thm} with the result from
\cite{CM} that every bounded degree acyclic hyperfinite Borel graph $G$ has $\chi_M(G)
\leq 3$ we also obtain the following, answering a question of Kechris and
Marks {\cite[Question 6.4]{KMa}} (see \cite{KMa} for a definition of the measure chromatic
number $\chi_M$).

\begin{cor}
  For every $d \geq 1$ and every $k \in \{2, \ldots, d+1\}$, there is
  a $d$-regular acyclic Borel graph $G$ with $\chi_B(G) = k$ and $\chi_M(G) = 3$.
\end{cor}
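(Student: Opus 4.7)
My plan is to deduce the corollary directly from Theorem~\ref{comb_thm}(1) together with the cited Conley--Miller bound. For each $d \geq 1$ and $k \in \{2,\ldots,d+1\}$, Theorem~\ref{comb_thm}(1) produces a $d$-regular acyclic hyperfinite Borel graph $G_k$ with $\chi_B(G_k) = k$. Since $G_k$ is bounded degree, acyclic, and hyperfinite, the cited result \cite{CM} immediately yields $\chi_M(G_k) \leq 3$. This already captures the essential content of the corollary: a Borel-vs-measure chromatic gap can be realized for every admissible value of $\chi_B$ in $\{2,\ldots,d+1\}$, while $\chi_M$ is pinned down to at most $3$ by the acyclic hyperfinite structure.

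To upgrade $\chi_M(G) \leq 3$ to an exact equality $\chi_M(G) = 3$, I would replace $G_k$ by the disjoint union $G = G_k \sqcup H$, where $H$ is an auxiliary $d$-regular acyclic hyperfinite Borel graph carrying a Borel probability measure $\mu$ that admits no $\mu$-measurable proper $2$-coloring, so that $\chi_M(H) \geq 3$; combined with \cite{CM}, this forces $\chi_M(H) = 3$. Such an $H$ can be extracted from, for instance, a free p.m.p.\ Borel action of a suitable group whose induced Schreier graph is a $d$-regular tree and whose invariant measure is ergodic, so that a measurable bipartition is ruled out. Because no edges are added between components, $\chi_B(G) = \max(\chi_B(G_k), \chi_B(H))$ and $\chi_M(G) = \max(\chi_M(G_k), \chi_M(H))$, so choosing $H$ with $\chi_B(H) \leq k$ (certainly possible since $\chi_B(H) \leq 4$ for any such $H$ by \cite{KST} and can be taken to be $3$) yields $\chi_B(G) = k$ and $\chi_M(G) = 3$ exactly. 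The resulting $G$ is still $d$-regular, acyclic, hyperfinite, and Borel.

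The argument is really just a packaging of Theorem~\ref{comb_thm}(1) with the Conley--Miller upper bound; no new combinatorial input is needed. The only mild obstacle is arranging the lower bound $\chi_M(G) \geq 3$, and the disjoint-union trick above disposes of this by importing a component where ergodicity of a natural invariant measure forbids a measurable $2$-coloring. Routine bookkeeping then confirms that $\chi_B$ and $\chi_M$ behave as claimed under disjoint union.
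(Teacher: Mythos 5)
Your first paragraph is precisely the paper's argument: the corollary is obtained by simply combining Theorem~\ref{comb_thm}(1) with the Conley--Miller bound from \cite{CM}, and the intended content (answering the question from \cite{KMa}) is the gap $\chi_M(G)\leq 3<k=\chi_B(G)$ for $k\geq 4$. Note in passing that the literal equality $\chi_M(G)=3$ is unattainable when $k=2$, since always $\chi_M(G)\leq\chi_B(G)$; no disjoint-union trick can repair that case, so the statement should really be read with $k\geq 3$ (or with ``$\chi_M(G)\leq 3$'').

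The part of your proposal that goes beyond the paper --- forcing the lower bound $\chi_M(G)\geq 3$ by adjoining an auxiliary component $H$ --- has a genuine gap exactly where it matters. You require $H$ to be simultaneously $d$-regular, acyclic, hyperfinite, and to carry an invariant (p.m.p.) ergodic probability measure whose ergodicity rules out a measurable $2$-coloring; for $d\geq 3$ no such $H$ exists. A free p.m.p.\ action whose Schreier graph is the $d$-regular tree is a free action of a nonamenable group and cannot be $\mu$-hyperfinite --- the paper invokes exactly this fact in the proof of Theorem~\ref{exotic_fin_measure} --- and more generally an acyclic graphing that is $d$-regular a.e.\ with respect to an invariant probability measure has cost $d/2>1$, hence generates a non-hyperfinite relation. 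If you instead drop hyperfiniteness of $H$, your appeal to \cite{CM} for $\chi_M(H)\leq 3$ collapses and that bound is no longer available; also, your parenthetical claim that $\chi_B(H)\leq 4$ ``by \cite{KST}'' is not what \cite{KST} gives (only $\chi_B\leq d+1$). The lower bound can be arranged for $k\geq 3$, but by a different device: take a hyperfinite $H$ whose measure-theoretically relevant part is only $2$-regular (e.g.\ containing the graph of an ergodic $T$ with $T^2$ ergodic, so that a measurable $2$-coloring would yield $A$ with $T(A)=A^c$ mod null, contradicting ergodicity of $T^2$), and pad degrees up to $d$ by attaching smooth acyclic trees, which does not disturb this obstruction; the invariant measure need only live on the $2$-regular core. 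As written, however, your auxiliary graph does not exist for the relevant degrees, so the upgrade from $\chi_M(G)\leq 3$ to $\chi_M(G)=3$ is not established by your argument.
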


Cs\'oka, Grabowski, M\'ath\'e, Pikhurko, and Tyros
have recently proved a Borel version of the local lemma for
bounded degree Borel graphs of uniformly subexponential growth
\cite{CGMPT}. We give the
precise statement of their theorem in Section~\ref{lll_subsection}. One might
hope that the Borel version of the local lemma is true for all
hyperfinite bounded degree Borel graphs. (Note that every Borel graph of
uniformly polynomial growth is hyperfinite by \cite{JKL}, and it is open
whether every bounded degree Borel graph of uniformly subexponential growth
is hyperfinite). We show that the Borel local lemma may fail for
hyperfinite Borel graphs:

\begin{thm}\label{lll_thm}
  There is a hyperfinite bounded degree  Borel graph $G$ so that the
  Borel local lemma in the sense of \cite{CGMPT} is false for $G$. 
\end{thm}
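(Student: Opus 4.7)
The plan is to adapt the iterated-game construction used in Theorem~\ref{comb_thm} so as to produce a hyperfinite bounded degree Borel graph $G$, together with a Borel CSP on $G$, that satisfies the LLL hypotheses of \cite{CGMPT} but admits no Borel solution. The backbone is Lemma~\ref{hyp_lemma}, used exactly as in the negative half of \cite{M1}: Borel unsolvability of a suitable CSP is derived by iterating the lemma across the factors of a free product, while LLL-admissibility is a routine combinatorial count.

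Concretely, I would fix a free product $\Gamma = \Gamma_1 * \cdots * \Gamma_n$ of finite groups whose orders and number $n$ are chosen so that the Schreier graph $G$ on $H(\Gamma^\Gamma)$, with respect to a fixed symmetric generating set drawn from each $\Gamma_i$, is $d$-regular for some $d$ lying in the LLL regime that interests us. By Proposition~1.3 the associated equivalence relation $E_\Gamma$ is hyperfinite, so $G$ is a hyperfinite bounded degree Borel graph. The CSP in question will ask for a Borel labeling $c \from H(\Gamma^\Gamma) \to \Sigma$ avoiding a family of local bad events supported on bounded-radius balls; by mirroring the coloring/matching setups of Theorem~\ref{comb_thm}, the alphabet $|\Sigma|$ can be chosen so that the uniform probability of each bad event times the dependency degree comfortably beats the LLL threshold $1/e$. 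To rule out a Borel solution, suppose $c$ exists; by a pigeonhole argument some $A = c^{-1}(S) \subset H(\Gamma^\Gamma)$ is a legitimate input to Lemma~\ref{hyp_lemma}, applied with $\Delta = \Gamma_i$ and the remaining factors amalgamated into the other side of the free product. Either alternative of the lemma produces an injective Borel $\Gamma_i$-equivariant copy of $H(\Gamma_i^{\Gamma_i})$ on which the $\Gamma_i$-slice of the CSP is pre-decided; iterating over $i$ in the manner of \cite{M1} forces the label to satisfy incompatible local conditions at some point, yielding the desired contradiction.

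The main obstacle is calibrating $\Gamma$, $\Sigma$, and the constraint patterns so that both requirements hold simultaneously: the CSP must be loose enough for the LLL slack $ep(D+1) < 1$ to be verifiable by direct counting, yet rigid enough that Marks's iterated game argument produces a violation no matter which branch of each alternative in Lemma~\ref{hyp_lemma} the adversary chooses. The LLL check itself should be routine, because bad events are local and dependency degrees are bounded in terms of $d$ and the constraint radius; the genuinely delicate step is arranging the CSP so that, in the LLL-admissible regime, the Marks game still terminates in a contradiction for every putative Borel solution. As signaled in the introduction, it is precisely the exponential growth of the Schreier graphs of free products that removes our $G$ from the scope of the \cite{CGMPT} theorem, leaving room for such a counterexample to exist.
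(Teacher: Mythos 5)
Your overall framework is the right one -- take the hyperfinite action of a free product on $H(\Gamma^\Gamma)$, use Lemma~\ref{hyp_lemma} to rule out Borel solutions, and check the LLL numerics by direct counting -- but the proposal stops exactly where the proof actually lives. You acknowledge that ``the genuinely delicate step is arranging the CSP'' so that it is simultaneously LLL-admissible and Borel-unsolvable, and then you do not produce such a CSP. Worse, the source you point to, ``mirroring the coloring/matching setups of Theorem~\ref{comb_thm},'' cannot work: for a $d$-regular graph, proper-coloring or perfect-matching constraints with parameters tight enough to be Borel-unsolvable (e.g.\ $k\le d+1$ colors) have bad-event probabilities on the order of $1/k$ per edge, nowhere near the threshold $1/(e\Delta)$ with $\Delta\approx d^2$ for $G^{\leq 2}$; and once you loosen them enough to enter the LLL regime (say $k\gg d$ colors), Borel solutions exist outright by the greedy bound of \cite{KST}, so Lemma~\ref{hyp_lemma} gives no contradiction. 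So the calibration problem you flag is not a technicality to be sorted out later; it is the content of the theorem, and the proposal contains no candidate rule that survives both tests.

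What the paper does is choose a rule whose failure probability is \emph{exponentially} small in the degree while its violation is \emph{exactly} the conclusion of Lemma~\ref{hyp_lemma}. Concretely: take $\F_{2n} = \Gamma_0 * \Gamma_1$ with $\Gamma_0,\Gamma_1\cong\F_n$ generated by symmetric sets $S_0, S_1$, let $G$ be the Schreier graph on $H(\F_{2n}^{\F_{2n}})$ (with loops), and take the $2$-local rule: if $f(x)=0$ then some $S_0$-neighbor has value $1$, and if $f(x)=1$ then some $S_1$-neighbor has value $0$. A single application of Lemma~\ref{hyp_lemma} with $A=f^{-1}(0)$ yields either a $\Gamma_0$-invariant set of points all labeled $0$ or a $\Gamma_1$-invariant set all labeled $1$, and either alternative violates the rule at every point of that set -- no iteration over factors or pigeonhole is needed. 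Meanwhile $p_R(x)=2^{-2n}$ and $G^{\leq 2}$ has degree $1+(4n)^2$, so $p_R < 1/(e\Delta)$ once $n\geq 6$. Until your write-up exhibits a rule with this kind of exponential slack (rather than deferring it), the argument has a genuine gap.
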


The examples we give are graphs generated by free hyperfinite actions of
$\F_{2n}$ for $n \geq 6$. The proof uses an idea of Kechris and Marks for
constructing Borel graphs for which the local lemma fails using the results
of \cite{M1}. 

Recall that if $X$ is a Polish space and $\mathcal{B}(X)$ are the Borel
subsets of $X$, then a \define{finitely additive Borel probability measure}
on $X$ is a finitely additive function $\mu \from \mathcal{B}(X) \to [0,1]$
such that $\mu(X) = 1$.
Lemma~\ref{hyp_lemma} can also be used to show the existence of certain
exotic finitely additive invariant Borel probability measures. 
This is interesting in light of
the Borel Ruziewicz problem:
whether Lebesgue measure is the only finitely additive
isometry-invariant probability measure defined on the Borel subsets of the $n$-sphere
for $n \geq 2$ {\cite[Question 11.13]{W}}. By results of 
Margulis and Sullivan ($n \geq 4$) and Drinfeld ($n = 2, 3$)
\cite{D}\cite{M}\cite{S}
it is known that any such measure not equal to Lebesgue measure must fail to be absolutely continuous
with respect to Lebesgue measure. Furthermore, by a result of Dougherty and Foreman, it
is also known that any such measure must be supported on a meager subset
of $X$
\cite{DF}. 
Generalizing this last result, Marks and Unger have shown that if any
group $\Gamma$ acts by Borel automorphisms paradoxically on a Polish space
$X$, then any
finitely additive $\Gamma$-invariant Borel measure on $X$ must be supported
on a meager subset of $X$ \cite{MU}. 

It has been an open problem to find any paradoxical Borel action of
a group on a standard Borel space
that admits an ``exotic'' finite additive invariant Borel probability
measure (in particular, one that is not countably additive). We show the following, where the group $(\Z/2\Z)^{*3}$ is a free
product of three copies of $\Z/2\Z$:

\begin{thm}[$\AC$]\label{exotic_fin_measure}
  There is a continuous free action of
  $(\Z/2\Z)^{*3}$ (which is hence paradoxical) on a Polish space 
  so that this action admits a finitely additive invariant
  Borel probability measure, but does not admit any countably additive
  invariant Borel probability measure. 
\end{thm}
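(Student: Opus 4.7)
The plan is to work with the hyperfinite action of $\Gamma = (\Z/2\Z)^{*3}$ on $X = H(\Gamma^\Gamma)$ from Definition~\ref{action_defn}. This action is continuous and free by construction, and the Proposition preceding Lemma~\ref{hyp_lemma} guarantees that its orbit equivalence relation $E_\Gamma$ is hyperfinite.

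For the absence of any countably additive invariant Borel probability measure, suppose $\mu$ were such a measure. Then $E_\Gamma$ is Borel hyperfinite and therefore $\mu$-hyperfinite, yet it is generated by a free probability-measure-preserving action of the non-amenable group $\Gamma$. This contradicts the standard consequence of Connes--Feldman--Weiss together with Zimmer's theorem on amenable equivalence relations: the orbit equivalence relation of a free probability-measure-preserving action of a non-amenable group can never be hyperfinite.

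For the existence of a finitely additive invariant Borel probability measure, I would invoke a Borel version of Tarski's theorem: such a measure exists if and only if $X$ admits no Borel $\Gamma$-paradoxical decomposition. Assume for contradiction that there is a paradoxical decomposition with Borel pieces $X = A_1 \sqcup \cdots \sqcup A_n$ and witnessing elements $\gamma_1,\ldots,\gamma_n \in \Gamma$, together with a partition $\{1,\ldots,n\} = I \sqcup J$ such that $\bigsqcup_{i \in I} \gamma_i A_i = X = \bigsqcup_{j \in J} \gamma_j A_j$. I would apply Lemma~\ref{hyp_lemma} iteratively, using the free-product decomposition $\Gamma = \langle a \rangle * \langle b, c \rangle$, where $\langle b,c \rangle \cong D_\infty$ is amenable. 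Because $D_\infty$ is amenable, $H(D_\infty^{D_\infty})$ carries a $D_\infty$-invariant finitely additive Borel probability measure, and the equivariant Borel injections produced by the Lemma push this measure forward into Borel subsets of $X$. Matching these pushed-forward $D_\infty$-invariant measures against the measure-doubling identities forced by paradoxicality should then yield a numerical contradiction.

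The main obstacle is carrying out this contradiction argument carefully. Because $(\Z/2\Z)^{*3}$ decomposes as a free product essentially only as $\Z/2\Z * D_\infty$, and the $\Z/2\Z$ side of the Lemma is trivial (the space $H(\Z/2\Z^{\Z/2\Z})$ is empty, since the unique nontrivial bijection of $\Z/2\Z$ is fixed under the defined action), the argument must rely exclusively on iterating the $D_\infty$-branch over the various pieces of the paradoxical decomposition. Moreover, the witnessing elements $\gamma_i$ need not lie in $\langle b,c \rangle$, so the interaction of the pushed-forward $D_\infty$-invariant measures with the paradoxical equalities requires delicate bookkeeping. This adapts the pigeonhole-style reasoning of \cite{M1} from the coloring setting to the measure-theoretic setting of paradoxical decompositions, and is the principal technical task.
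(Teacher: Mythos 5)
Your setup and the first half of your argument coincide with the paper's: you work with the action of $\Gamma=(\Z/2\Z)^{*3}$ on $H(\Gamma^\Gamma)$ from Definition~\ref{action_defn}, freeness and hyperfiniteness come for free, and the nonexistence of a countably additive invariant probability measure is ruled out exactly as in the paper (a free measure-preserving action of a nonamenable group cannot be hyperfinite almost everywhere). The divergence, and the gaps, are in the finitely additive half.

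First, the reduction is misstated. For a $\Gamma$-invariant subalgebra such as the Borel sets, Tarski's theorem in the form the paper uses ({\cite[Theorem 9.1]{W}}) says a finitely additive invariant probability measure exists if and only if, for every $n$, $n+1$ copies of $X$ are not Borel equidecomposable with a subset of $n$ copies of $X$. The equivalence of this condition with ``no paradoxical decomposition'' (the case $n=1$) uses the cancellation law in the type semigroup, whose proof is a perfect-matching/choice argument that is precisely the sort of thing that fails Borel-ly (failures of Borel matchings are a theme of this very paper). So ruling out Borel paradoxical decompositions alone would not give you the measure; you must rule out the $(n+1,n)$ Borel equidecompositions for all $n$, which is what the paper does.

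Second, and more seriously, the proposed contradiction mechanism cannot work. A finitely additive measure invariant only under the amenable subgroup $\langle b,c\rangle\cong D_\infty$ is perfectly compatible with $\Gamma$-paradoxicality: under $\AC$ every free action of $\Gamma$ is paradoxical, yet $D_\infty$-invariant finitely additive measures always exist on it by amenability. Hence no bookkeeping about where the witnessing group elements lie can turn pushforwards of $D_\infty$-invariant measures into a numerical contradiction; the obstruction must be combinatorial and must use all three generators. Moreover, your own (correct) observation that $H((\Z/2\Z)^{\Z/2\Z})=\emptyset$ undercuts your use of Lemma~\ref{hyp_lemma}: for the splitting $\Gamma=\langle a\rangle *\langle b,c\rangle$, the $\langle a\rangle$-alternative of the dichotomy is satisfied vacuously by the empty function, so the lemma as stated never forces the $D_\infty$-alternative you need; to extract anything one must work at the level of the game and its strategies. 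The paper's actual argument is quite different: assuming Borel functions $f_0,\dots,f_n$ witness an $(n+1,n)$ equidecomposition, it counts, for each directed edge of the acyclic graph generated by $a,b,c$, how many pairs $(z,i)$ route the geodesic from $z$ to $f_i(z)$ through that edge; a mass-transport computation shows every $x$ has a neighbor with strictly positive net outflow, giving a Borel map $h$ with $h(x)$ adjacent to $x$ and $h^2(x)\neq x$; partitioning $X$ into $A_a,A_b,A_c$ according to which generator realizes $h(x)$ and applying Lemma~\ref{hyp_lemma} twice (or Lemma~\ref{ctbl_version} once) yields $\gamma$ and $y$ with $y$ and $\gamma\cdot y$ in the same piece, contradicting $h^2\neq\mathrm{id}$. (Your emptiness observation does flag a genuine subtlety in that last step as literally written, since the scaffold $H(\langle\gamma\rangle^{\langle\gamma\rangle})$ is empty for a $\Z/2\Z$ factor; the repair is to argue directly with the strategies or to use a nonempty free hyperfinite $\Z/2\Z$-action as in the paper's remark on universal actions---not to abandon the combinatorial scheme for invariant measures on amenable factors.)
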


Our techniques also allow us to construct interesting measures in a
different context. Zapletal has suggested the problem of investigating the
structure of ultrafilters on $2^\N/E_0$ under $\AD$. Some examples of such
ultrafilters are the ultrafilter $U_L$ containing the Lebesgue conull
$E_0$-invariant sets, and the ultrafilter $U_C$ containing the comeager
$E_0$-invariant sets.
One can organize such ultrafilters by Rudin-Kiesler reducibility. Here, for
example, it is open whether every ultrafilter on $2^\N/E_0$ is
Rudin-Kiesler above $U_L$ or $U_C$. (See {\cite[Section 4]{M2}}
for further discussion and a definition of Rudin-Kiesler reducibility). We show the existence of an 
ultrafilter on $2^\N/E_0$ which has surprising complexity:

\begin{thm}[$\AD$] \label{uf} There is an ultrafilter $U$ on $2^\N/E_0$ so
that Martin measure on $2^\N/\equiv_T$ is Rudin-Kiesler reducible to $U$.
In fact, the Rudin-Kiesler reduction can be chosen to be Borel. 
\end{thm}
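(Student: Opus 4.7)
The plan is to take the Rudin--Kiesler reduction to be the natural Borel map $\bar\iota\colon 2^\N/E_0 \to 2^\N/\equiv_T$ induced by the identity on $2^\N$ (well-defined since $E_0 \subseteq \equiv_T$), and to construct $U$ so that $U$ contains every $E_0$-saturated Turing cone, which guarantees $\bar\iota_* U = \text{Martin}$.

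To build $U$, first dispatch the cone-decidable $E_0$-invariant sets. For each $E_0$-invariant Borel $A \subseteq 2^\N$, define $A_\forall = \{d \in 2^\N/\equiv_T : \text{every } x \in d \text{ satisfies } [x]_{E_0} \in A\}$. This set is $\equiv_T$-invariant, so by Martin's theorem it either contains or is disjoint from a cone, and $A_\forall$ and $(A^c)_\forall$ cannot both contain cones. Declare $A \in U$ when $A_\forall$ contains a cone, and $A \notin U$ when $(A^c)_\forall$ does. The remaining ambiguous case is when each Turing degree on a cone has some but not all of its $E_0$-subclasses in $A$, so neither $A_\forall$ nor $(A^c)_\forall$ contains a cone.

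For the ambiguous case, I would apply Lemma~\ref{hyp_lemma} with $\Gamma = \Delta = \Z$ to supply a canonical tiebreaker. Fix a Borel map $\Phi\colon H((\Z * \Z)^{\Z * \Z}) \to 2^\N$ compatible with Turing degrees and $E_0$-structure on a cone, and let $A^* = \Phi^{-1}(A)$. The dichotomy then yields either a $\Gamma$-equivariant Borel embedding of $H(\Z^\Z)$ into $A^*$, in which case we declare $A \in U$, or a $\Delta$-equivariant Borel embedding into $(A^*)^c$, in which case we put $A \notin U$. The asymmetric roles of the two copies of $\Z$ resolve the tie even though the groups are isomorphic.

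The main obstacle, and the technical heart of the proof, is establishing coherence: the above rules must combine to give a genuine filter, so that $A, B \in U$ implies $A \cap B \in U$, and so that the two regimes (cone-decided vs.\ tiebroken) never contradict one another. This requires $\Phi$ to be designed so that the dichotomy of Lemma~\ref{hyp_lemma} interacts predictably with Boolean operations and with pushforward along $\bar\iota$. This parallels the uses of Lemma~\ref{hyp_lemma} elsewhere in the paper to construct globally coherent Borel objects rather than just local dichotomies, and I expect similar techniques to suffice here. Once coherence holds, $\bar\iota_* U = \text{Martin}$ follows immediately from the case analysis, since every Martin cone has been explicitly placed in $U$ via the first regime and $U$ is an ultrafilter by construction.
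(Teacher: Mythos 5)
There is a genuine gap, and it sits exactly where you placed your ``I expect similar techniques to suffice'': the coherence of your two-regime definition is not a technical detail to be deferred, it is the entire content of the theorem, and the mechanism you propose for the ambiguous regime cannot deliver it. Lemma~\ref{hyp_lemma} is not an exclusive dichotomy: for a given set both alternatives can hold simultaneously (an equivariant copy of $H(\Z^\Z)$ inside $A^*$ and another inside its complement), so ``declare $A\in U$ if alternative (1) holds'' does not even decide $A$ versus $A^c$ consistently, let alone give closure under intersection or compatibility with the cone-decided regime. You also leave $\Phi$ entirely unspecified, and you restrict to Borel $A$, whereas an ultrafilter on $2^\N/E_0$ must decide \emph{every} $E_0$-invariant set; Borel determinacy is not enough for that, which is precisely why the hypothesis of the theorem is $\AD$.

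The paper's proof repairs exactly these points by making the \emph{game}, not the dichotomy statement, the definition of $U$. Work on $H(\F_2^{\F_2})$ with $\Gamma=\Delta=\Z$ (transferring to $2^\N/E_0$ at the end via the Dougherty--Jackson--Kechris theorem, since $E_{\F_2}$ is hyperfinite), fix the set $C$ from the proof of Lemma~\ref{hyp_lemma}, and for an arbitrary $E_{\F_2}$-invariant $A$ declare $A\in U$ if and only if player II wins $G_{A\union C}$; $\AD$ gives determinacy of these games for arbitrary payoff sets, and the ultrafilter axioms (complementation, upward closure, finite intersection) are verified by strategy arguments as in \cite[Lemma 4.9]{M2} rather than by invoking the equivariant-embedding dichotomy as a black box. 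There is then no separate ``cone-decided'' regime to reconcile: since plays consistent with a winning strategy for II realize every Turing degree above the degree of that strategy, a Turing-invariant $A$ lies in $U$ if and only if it contains a cone, so the identity map (a Borel homomorphism from $E_{\F_2}$ to $\equiv_T$, exactly in the spirit of your $\bar\iota$) pushes $U$ forward to Martin measure. Your endgame is the right one, but without the game-theoretic definition of $U$ and the accompanying strategy arguments, the construction of the ultrafilter itself is missing.
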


It is an open question whether there is a nontrivial ultrafilter on
$2^\N/E_0$ that is Rudin-Kiesler reducible to Martin's ultrafilter. This is
equivalent to Thomas's question of whether 
Martin measure is strongly ergodic \cite{T}.

One way of regarding Theorem~\ref{comb_thm}
is that it constrains 
types of witnesses that can exist to the hyperfiniteness of a Borel graph. In
particular, there do not exist witnesses to hyperfiniteness possessing
properties useful for carrying out combinatorial constructions. In
contrast, in the measure-theoretic or Baire category contexts, there do
exist ``nice'' hyperfiniteness witnesses. 
See for example \cite{CM} for such constructions. 

We pose an open question that is a very simple
attempt to understand what global control we can exert over the witnesses
to the hyperfiniteness of a bounded degree Borel graph:

\begin{question}\label{q1}
  Suppose $G$ is a bounded degree hyperfinite Borel graph. Does there exist
  an increasing sequence $G_1 \subset G_2 \subset \ldots \subset G$ of Borel 
  subgraphs of $G$ such that
  \begin{enumerate}
    \item $G_1, G_2, \ldots$ witnesses that $G$ is hyperfinite. That is,
    for every $n$, each
    connected component of $G_n$ is finite, and $\bigunion_{n} G_n = G$.
    \item Every connected
    component of $G_n$ has cardinality at least $n$. 
  \end{enumerate}
\end{question}

We show that this question has a positive answer in some contexts:
\begin{prop}\label{q1_prop}
Suppose $G$ is a bounded degree hyperfinite Borel graph on a standard Borel
space $X$. Then
Question~\ref{q1} has a positive answer modulo a nullset with respect to
any Borel probability measure on $X$ and a meager set with respect to any
compatible Polish topology on $X$. Question~\ref{q1} also has a positive
answer when $G$ is generated by a single Borel function.
\end{prop}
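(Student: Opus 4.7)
The plan is to handle all three parts by refining an arbitrary hyperfiniteness witness $\{H_n\}_{n\in\N}$ for $G$. The measure and category cases use a rapidly growing subsequence combined with a nested-intersection trick, while the function case exploits the rigid structure of functional graphs directly.

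For the measure case, fix a Borel probability measure $\mu$ on $X$. Absorbing $\{x:|[x]_{E_G}|<\infty\}$ into the null set (or handling it as a trivial disjoint summand), we may assume $|[x]_{H_n}|\to\infty$ for $\mu$-a.e.\ $x$. Inductively choose $n_1<n_2<\cdots$ so that $\mu(\{x:|[x]_{H_{n_k}}|<k\})<2^{-k}$, and set
\[
A_k:=\bigcap_{j\geq k}\bigl\{x:|[x]_{H_{n_j}}|\geq j\bigr\}.
\]
Two observations drive the construction. First, $A_k$ is a union of whole $H_{n_k}$-components: any two points in a common $H_{n_k}$-component share their $H_{n_j}$-components for all $j\geq k$ (since $H_{n_k}\subset H_{n_j}$), and hence share each size constraint. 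Second, $A_k\subset A_{k+1}$, since the latter intersection is over fewer indices. As $\mu(A_k^c)<2^{1-k}$, the union $\bigcup_k A_k$ is $\mu$-conull. Setting $G_k:=H_{n_k}\cap(A_k\times A_k)$ gives an increasing sequence of Borel subgraphs of $G$ whose $k$-th term has every connected component of size at least $k$, and whose union equals $G$ on the conull set $\bigcup_k A_k$.

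For the category case, fix a compatible Polish topology on $X$. The argument parallels the measure case with meagerness in place of nullity, but the direct Borel--Cantelli analog fails, so a different route is needed. The cleanest approach is to invoke a Baire-category analog of the Connes--Feldman--Weiss theorem---modulo a meager invariant set, $E_G$ is generated by a single aperiodic Borel automorphism---which reduces the problem to the function case below, followed by a transfer step using that $G$ generates $E_G$ to move the resulting combinatorial structure back into a subgraph of $G$.

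For the function case, let $f\colon X\to X$ be Borel with $G$ the graph of edges $\{\{x,f(x)\}:f(x)\neq x\}$. Restricting to the $E_G$-invariant Borel set of infinite components, each component has the classical functional-graph structure: a backbone---either a one-ended infinite forward $f$-ray or a cycle---with finite trees of iterated preimages attached. Apply a nested Borel marker construction along each infinite backbone to partition it into consecutive intervals of length $n!$ (any nested-divisor sequence works), so that the $(n{+}1)$-partition refines the $n$-partition. Define $G_n$ to include the $G$-edges along each backbone interval together with all tree-edges attached to vertices in that interval; monotonicity and the size bound are then immediate. The main obstacle throughout is the tension between monotonicity $G_n\subset G_{n+1}$ and the growing component-size bound, resolved here by the nested intersections $A_k$ in the measure and category cases and by nested markers in the function case.
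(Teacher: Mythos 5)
Your measure-case construction does not satisfy condition (2) of Question~\ref{q1}, and the failure is exactly at the point that gives the question its content. Your observations about the sets $A_k$ are correct ($A_k$ is a union of $H_{n_k}$-components, the $A_k$ increase, and $Y=\bigcup_k A_k$ is conull), but the graph $G_k=H_{n_k}\cap(A_k\times A_k)$ leaves every point of $Y\setminus A_k$ as an isolated vertex, i.e.\ a connected component of size $1<k$; the set $Y\setminus A_k$ has measure $<2^{1-k}$ but is in general nonempty, and you cannot discard these points for all $k$ at once, since $\bigcup_k(Y\setminus A_k)=Y\setminus A_1$ need not be null. What your argument actually yields is the pointwise statement that each $x$ eventually lies in large components, which is trivial for \emph{any} hyperfiniteness witness (no measure needed), since the $H_n$-component of $x$ increases to its $G$-component. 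The entire difficulty of Question~\ref{q1} is the uniform lower bound at every stage, i.e.\ absorbing the ``latecomers'' into large finite components without losing finiteness or monotonicity; note that $A_k$ is a union of $H_{n_k}$-components but \emph{not} of $H_{n_j}$-components for $j>k$, so the obvious repairs (restricting all later $H_{n_j}$ to a fixed $A_K$, or joining latecomers to their future components via larger $H_{n_j}$'s) either cut components below the required size or risk creating infinite components. This uniformization step is what the paper's machinery (Adams's end selection together with Lemmas~\ref{subgraph}, \ref{bounded_to_one}, and \ref{two-ends}) is designed to carry out.

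The category case as you present it defers its crux to an unproved ``transfer step.'' Since $E_G$ is hyperfinite by hypothesis and (off the finite classes) aperiodic, it is generated by a single Borel automorphism outright, by Dougherty--Jackson--Kechris, so no Baire-category analogue of Connes--Feldman--Weiss is relevant; but a nested witness for the line graph of that automorphism does not transfer to $G$: the resulting finite classes need not be $G$-connected, so intersecting them with $G$ can again produce singleton components. Making such a transfer work is precisely where the labor lies (compare Lemma~\ref{subgraph}, which transfers only from an induced subgraph meeting each component in a single component, and whose proof needs lex-least paths, K\"onig's lemma, and a careful two-case analysis), and you give no argument for it. The function case contains an outright error: for a bounded-to-one $f$ the trees of iterated preimages hanging off the ``backbone'' need not be finite --- consider the one-sided shift on $2^{\N}$ or the parent map on the infinite binary tree, where every preimage tree is infinite --- so your $G_n$, which attaches all tree edges rooted at a backbone interval, has infinite components, and indeed a component of $G_f$ need not have a two-ended backbone at all. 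The paper's route avoids this: Lemma~\ref{bfr} produces a Borel independent set that every point reaches within four forward steps, and Lemma~\ref{bounded_to_one} iterates graph minors contracting $x\mapsto f(x)$ off that set, doubling component sizes at each stage regardless of the preimage structure. (Also, for monotonicity you need the $n$-th interval partition of a backbone to refine the $(n+1)$-st, not the reverse as written.)
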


However, we conjecture that Question~\ref{q1} has a negative answer. 

\subsection{Notation and conventions}

Our notation is mostly standard, and largely follows \cite{KMa}. Ideally,
the reader will also have some familiarity with \cite{M1}, since
much of what follows builds on ideas from that paper. 

\section{The main lemma for $H((\Gamma * \Delta)^{\Gamma * \Delta})$}

Suppose $\Gamma$ is a countable discrete group.
Throughout this section we will often deal with partial functions from
$\Gamma$ to $\Gamma$. We may define the same action as in
Definition~\ref{action_defn} more generally for partial functions, and we
begin by defining an associated partial order: 

\begin{defn}
  Suppose $x$ is a partial injection from $\Gamma$ to $\Gamma$. If
  $1 \in \dom(x)$, then define $w(x) = (x(1))^{-1} \cdot x$, otherwise
  $w(x)$ is undefined. Define a strict partial order $<_\Gamma$ on the space of
  partial functions from $\Gamma$ to $\Gamma$ by setting $x <_\Gamma y$ if
  $\exists n > 0 (w^n(x) = y)$. Since $w$ generates $E_\Gamma$, on each
  $E_\Gamma$-class, the ordering $<_\Gamma$ is isomorphic to a subordering
  of $\Z$. 
\end{defn}

Next, we make some additional definitions related to
$H(\Gamma^\Gamma)$: 

\begin{defn}
  Suppose $x$ is a finite partial injection from $\Gamma$ to $\Gamma$.
  Say $x$ has
  \define{one
  orbit} if for all $\gamma, \delta \in \dom(x)$ there is an $n \in \Z$ such
  that $x^n(\gamma) = \delta$. If $x$ is nonempty, say \define{$x$ begins 
  at $\gamma$} if $\gamma \in \dom(x)$ but $\gamma \notin ran(x)$
  and \define{$x$ ends at $\delta$} if $\delta \in \ran(x)$ but $\delta
  \notin \dom(x)$. If $x$ is the empty function, then say that $x$ begins
  and ends at $1$.
\end{defn}

Note that the action of $\Gamma$ on $\Gamma^\Gamma$ in
Definition~\ref{action_defn} is chosen to interact well with the
permutation each bijection $x \in \Gamma^\Gamma$ induces on $\Gamma$. In
particular, suppose $y$ is a partial function from $\Gamma$ to $\Gamma$ and
$R \subset \Gamma$ is an orbit of $y$. Then it is easy to check that for
every $\gamma \in \Gamma$, $\gamma R$ is an orbit of the
permutation induced by $\gamma \cdot y$. 

We are now ready to prove our main lemma.

\begin{proof}[Proof of Lemma~\ref{hyp_lemma}.]
  We may assume that $\Gamma$ and $\Delta$ are nontrivial. As in
  the proof of {\cite[Lemma 2.1]{M1}}, let $Y \subset (\Gamma *
  \Delta)^{\Gamma * \Delta}$ be the set of all $x \in (\Gamma *
  \Delta)^{\Gamma * \Delta}$ such that for all $\alpha \in \Gamma * \Delta$
  and all nonidentity $\gamma \in \Gamma$ and $\delta \in \Delta$ we have
  $\gamma \cdot (\alpha^{-1} \cdot x) \neq (\alpha^{-1} \cdot x)$ and
  $\delta \cdot (\alpha^{-1} \cdot x) \neq (\alpha^{-1} \cdot x)$. Note
  that $\Free((\Gamma * \Delta)^{\Gamma * \Delta}) \subset Y$. 

  Every nonidentity word $\alpha \in \Gamma$ can be written as a reduced
  word of the form $\gamma_0 \delta_0 \gamma_1 \delta_1 \ldots$ or
  $\delta_0 \gamma_0 \delta_1 \gamma_1 \ldots$ where $\gamma_i \in \Gamma$
  and $\delta_i \in \Delta$ are nonidentity elements. We let the
  \define{length} of $\gamma \in \Gamma * \Delta$ be its length as a
  reduced word. We say $\alpha \in \Gamma * \Delta$ is a
  \define{$\Gamma$-word} if it begins with an element of $\Gamma$ as a
  reduced word, and a \define{$\Delta$-word} if it begins with an element
  of $\Delta$ as a reduced word. So $\Gamma * \Delta$ is the disjoint union
  of the set of $\Gamma$-words, $\Delta$-words, and the identity.

For each $B \subset Y$, 
define a game $G_B$ for producing a (perhaps partial) injection $y$ from
  $\Gamma * \Delta$ to $\Gamma * \Delta$ with one orbit. The players will alternate
  defining $y(\alpha)$ for finitely many $\alpha \in \Gamma * \Delta$ subject to the following
  rules:
  
  \begin{itemize}
  \item 
  After each move of player I, $y$ must be injective, have one orbit, and 
  end at some $\Gamma$-word. After each move of player II, $y$ must be
  injective, have one orbit, and end at some $\Delta$-word. 
  \item On each move of the game, if the current partial function $y$
  that has been defined before this move ends at $\xi \in \Gamma *
  \Delta$, then as part of the current move, the current player must define
  $y(\xi)$.
  \item In addition to the requirement of the previous rule, on each of their moves player I may also 
  define $y(\alpha)$ for arbitrarily many $\alpha$ that are
  $\Gamma$-words. On each of their moves player II may also define $y(\alpha)$ for arbitrarily
  many nonidentity $\alpha$ that are $\Delta$-words. 

  \item 
  At the end of the game, if $y$ is not a total function, then II loses if
  and only if among the $\alpha \notin \dom(y)$ that are of minimal length,
  there is some $\alpha$ which is a $\Delta$-word or the identity. If $y$ is total but $y \notin Y$, then II loses if and only if among the
  $\alpha$ witnessing $y \notin Y$ of minimal length, there is some
  $\alpha$ which is a $\Delta$-word, or $\alpha = 1$ witnesses $\alpha
  \notin Y$ via the fact that $\delta \cdot y = y$ for some nonidentity
  $\delta \in \Delta$. Finally, if $y$ is total and $y \in Y$, then I wins
  if $y$ is not in $B$. 
  \end{itemize}

  For example, on the first turn of the game (where our current version
  of $y$ is the empty function which by definition ends at $1$),
  player I must define $y(1)$, and then may also define $y$ on finitely
  many other $\Gamma$-words. The resulting finite partial function $y$ must
  be injective, have one orbit, and end at some $\Gamma$-word. 

  Note that since
  $y$ has a single orbit after each turn of the game, it will also have a
  single orbit at the end of the game.

  By {\cite[Lemma 2.3]{M1}} we can find a Borel subset $C$ of $Y \setminus
  \Free((\Gamma * \Delta)^{\Gamma * \Delta})$ such that $C$ meets every
  $E_\Delta$-class on $Y \setminus
  \Free((\Gamma * \Delta)^{\Gamma * \Delta})$ and the complement of $C$ meets
  every $E_\Gamma$-class on $Y \setminus
  \Free((\Gamma * \Delta)^{\Gamma * \Delta})$.

  By Borel determinacy, one of the two players must have a winning strategy
  in the game associated to the set $B = A \union C$.
  Suppose player I has a winning strategy, and fix such a
  strategy. We will construct
  an injective Borel $\Delta$-equivariant function $f \from
  H(\Delta^\Delta) \to H((\Gamma * \Delta)^{\Gamma * \Delta})$ with
  $\ran(f) \inters A = \emptyset$. 
  We will define $f$ so that for all $x \in H(\Delta^\Delta)$, $f(x)$ is a
  winning outcome of player I's winning strategy in the game and so $f(x)
  \notin A$. 
  We
  will ensure that $f$ is injective by enforcing that $x <_\Delta y$ if and
  only if $f(x) <_{\Gamma * \Delta} f(y)$.
  

  Let $E_0 \subset E_1 \subset E_2 \subset \ldots$ be finite Borel
  equivalence relations that witness the hyperfiniteness of
  $E_\Delta$. We may assume that $E_0$ is the equality relation and
  also that every $E_n$-class is an interval in the ordering $<_\Delta$ by passing
  instead to the relations $E'_n$ where $x \mathrel{E'_n} y$ if the
  $<_\Delta$-interval from $x$ to $y$ lies inside $[x]_{E_n}$.
  For each $x \in H(\Delta^\Delta)$, let $E^x_n$ be the equivalence relation on
  $\Gamma * \Delta$ where $\alpha_0 \mathrel{E^x_n} \alpha_1$ if and only if
  $\delta_0^{-1} \cdot x \mathrel{E_n} \delta_1^{-1} \cdot x$ where $\delta_0,
  \delta_1$ are the unique elements of $\Delta$ such that $\alpha_0$ and
  $\alpha_1$ can be expressed as $\alpha_0 =
  \delta_0 \beta_0$ and $\alpha_1 = \delta_1 \beta_1$ where $\beta_0$ and
  $\beta_1$ are $\Gamma$-words or the identity. Note that in general the
  classes of $E^x_n$ will not be finite. However, for each $E^x_n$-class
  $[\alpha]_{E^x_n}$, $[\alpha]_{E^x_n} \inters \Delta$ will be finite. 

  Fix $x \in H(\Delta^\Delta)$. We will define $f(x)$ via a construction
  that takes countably many steps. In this construction, for each $\delta \in \Delta$ we will play an instance of the game whose outcome
  will be equal to $\delta^{-1}
  \cdot f(x)$. At step $0$ of our construction, let player I move in the game
  associated to each $\delta \in \Delta$ using their winning strategy. 
  
  The only choices we will be making in our construction (other than
  keeping the play of the games consistent with each other) will be
  connecting up the orbits of the finite partial functions constructed in each of the
  games
  so that everything is eventually connected. We will do this
  using the witness to the hyperfiniteness of $E_\Delta$ and in our
  role as player II in all the games. 

  Inductively assume that after step $n$ of our construction, for every
  $\delta \in \Delta$,
  \begin{enumerate}
    \item $f(x) \restriction [\delta]_{E^x_n}$ is a
    finite partial injection which has one orbit.
    \item If
    $\delta_0, \ldots, \delta_k$ enumerates the elements of
    $[\delta]_{E^x_n} \inters \Delta$ in the order so that $\delta_0^{-1}
    \cdot x
    <_\Delta \ldots <_\Delta \delta_k^{-1} \cdot x$, then 
    $f(x) \restriction [\delta]_{E^x_n}$ ends at a
    group element of the form $\delta_k \beta$, where 
    $\beta$ is a $\Gamma$-word. 
    \item For every $\delta_i, \delta_j \in [\delta]_{E^x_n} \inters \Delta$
    such that $\delta_i^{-1}
    \cdot x \mathrel{E_n} \delta_j^{-1} \cdot x$, we have that $\delta_i^{-1}
    \cdot x <_\Delta \delta_j^{-1} \cdot x$ if and only if $\delta_i^{-1}
    \cdot f(x) <_{\Gamma * \Delta} \delta_j^{-1} \cdot f(x)$.
    \item In the game associated to $\delta$, the last move was made by
    player I (using their strategy). The current finite
    partial function defined in the game associated to $\delta$
    includes every value of $(\delta^{-1} \cdot
    f(x))(\alpha)$ we defined during the previous step of the construction, where $\alpha$ is a $\Gamma$-word or the identity.
    \item 
    If $n > 0$ and $[\delta^{-1} \cdot x]_{E_n}$ contains more than one element, then
    in the game associated to $\delta$, during step $n$ we played a move
    for both player II and player I (in that order), and we defined
    $(\delta^{-1} \cdot f(x))(\alpha)$ for every $\Delta$-word $\alpha$
    contained in $\delta^{-1} [\delta]_{E^x_{n-1}}$ that had been already defined in
    step $n-1$.
  \end{enumerate}
  We now describe step $n+1$ of our construction. For each $\delta \in
  \Delta$, let $\delta_0, \ldots, \delta_k$ enumerate the elements of
  $[\delta]_{E^x_{n+1}} \inters \Delta$ in the order so that $\delta_0^{-1}
  \cdot x <_\Delta \ldots <_\Delta \delta_k^{-1} \cdot x$.
  Each $E^x_{n+1}$-class
  $[\delta]_{E^x_{n+1}}$ contains finitely many $E^x_{n}$-classes
  $[\beta_0]_{E^x_{n}}, \ldots, [\beta_m]_{E^x_{n}}$. For every $i < k$
  so that $\delta_i$ and $\delta_{i+1}$ are in different
  $E^x_{n}$-classes, 
  $f(x) \restriction [\delta_{i}]_{E^x_{n}}$ ends at some
  group element $\xi_{i}$ and $f(x) \restriction [\delta_{i+1}]_{E^x_{n}}$
  begins at some group element $\alpha_{i+1}$. In this case, define $f(x)(\xi_i) =
  \alpha_{i+1}$ so that 
  \[w(\xi_{i}^{-1} \cdot f(x)) = 
  \alpha_{i+1}^{-1} \cdot f(x).\]
  After doing this, note that parts (1), (2), and (3) of our induction
  hypothesis are true for $n+1$. However, we are not yet
  finished with our definition of $f(x)$ at step $n+1$ so these properties
  still need to be checked after we are finished. 

  Assume now that $k \geq 1$ so there are at least two elements of
  $[\delta]_{E^x_{n+1}} \inters \Delta$ (else we are finished with our
  definition of $f(x) \restriction [\delta]_{E^x_{n+1}}$ and part (5) of
  our induction hypothesis is also true).
  For each $\delta_i$, one at a time and in order, we
  will first move for player II in the game associated to $\delta_i$, and then let the
  strategy for player I move. Before we begin this process, note that by
  the previous paragraph, $f(x) \restriction
  [\delta]_{E^x_{n+1}}$ has one orbit and
  ends at a word of the form $\delta_k \beta$ where $\beta$ is a $\Gamma$-word. Indeed, inductively, before we consider the game associated to
  $\delta_i$, it will be the case that $f(x) \restriction
  [\delta]_{E^x_{n+1}}$ has one orbit and
  ends at a word of the form $\delta_{i'} \beta$ where $i' = i-1
  \bmod{k+1}$ and $\beta$ is a $\Gamma$-word. 
  
  So for the game
  associated to each $\delta_i$, we make a move for player II by playing
  every value of 
  $\delta_i^{-1} \cdot (f(x) \restriction
  [\delta]_{E^x_{n+1}})$ that has already been defined but not yet played in
  the game. Playing these values will be consistent with the 
  rules of the game by our induction hypothesis. Now we let player
  I's strategy move in the game to define additional values of $\delta_i^{-1} \cdot
  f(x)$. Note that after these two moves, $f(x) \restriction [\delta]_{E^x_{n+1}}$ will have one
  orbit and end at a group
  element of the form $\delta_i \beta$ where $\beta$ is a $\Gamma$-word by
  the rules of the game. 
  After doing this for each of $\delta_0, \ldots, \delta_k \in 
  [\delta]_{E^x_{n+1}} \inters \Delta$ in order, we are finished with 
  step $n+1$ of the construction. Verifying that our inductive hypotheses
  are satisfied is easy, and we are now done with the
  construction of $f(x)$. Verifying that $f$ is Borel and
  $\Delta$-equivariant is straightforward.

  Suppose $x \in H(\Delta^\Delta)$. 
  By part (5) of our induction hypothesis, since $[x]_{E_n}$ has at least
  two elements for sufficiently large $n$, we will play infinitely many
  moves in the game associated to $\delta = 1$ (so the game finishes), and the outcome of the game
  will be equal to the value of $f(x)$ defined by our construction by (4)
  and (5).
  The only thing that remains is to verify that $f(x)$ is total and 
  $f(x) \in H((\Gamma * \Delta)^{\Gamma * \Delta})$ for
  every $x \in H(\Delta^\Delta)$. 
  
  To begin, we prove that $f(x)$ is total for every $x \in
  H(\Delta^\Delta)$. Note that by the definition of
  step 0 of our construction $f(x)(1)$ is defined for all $x \in
  H(\Delta^\Delta)$ (since on the first move player I must define $y(1)$). 
  Then inductively, supposing $f(x)(\alpha)$ is defined on all words
  $\alpha$ of length $n$,
  if $\alpha = \delta \beta$ is any
  $\Delta$-word of length $n+1$ where $\delta \in \Delta$ and $\beta$ is a
  $\Gamma$-word or the identity, then $f(x)(\alpha) =
  \delta ((\delta^{-1} \cdot f(x))(\beta)) = \delta (f(\delta^{-1} \cdot
  x)(\beta))$ must be
  defined by our induction hypothesis. Thus, $f(x)(\alpha)$ must be defined
  for all $\Delta$-words of length $n+1$, and thus also defined 
  for all $\Gamma$-words of length $n+1$ (else player I loses).

  Similarly, the same inductive idea shows that $f(x) \in Y$ for every $x \in
  H(\Delta^\Delta)$. (Alternatively, copy the penultimate paragraph of the proof of
  {\cite[Lemma 2.1]{M1}}).
  
  Now since we have proved that $f(x) \in Y$ for every $x \in
  H(\Delta^\Delta)$, we must have that $f(x) \in \Free((\Gamma *
  \Delta)^{\Gamma * \Delta})$ since $C$ meets every
  $\Delta$-invariant set in $Y \setminus \Free((\Gamma * \Delta)^{\Gamma *
  \Delta})$ by the definition of $C$. Thus, $f(x) \in H(
  (\Gamma * \Delta)^{\Gamma * \Delta})$ for every $x \in H(\Delta^\Delta)$.
  Finally, $f(x) \notin A$ since $f(x)$ is a winning outcome of player I's
  strategy in $G_{A \union C}$. 
  
  This completes the proof
  in the case that player I has a winning strategy in $G_{A \union C}$. The
  proof in the case that player II has a winning strategy is very
  similar.
\end{proof}

Various bells and whistles can be added onto the above lemma. 
For example, the generalization of this lemma to countable
free products is also true:

\begin{lemma}\label{ctbl_version}
  Suppose $I \in \{2, 3, \ldots, \N\}$, $\{\Gamma_i\}_{i \in I}$ is a collection of
  countably many countable discrete groups, and $\{A_i\}_{i \in I}$ is a Borel partition
  of $H((\fpGammaiI)^{\fpGammaiI})$. Then there exists some $j \in I$ and an injective Borel
  $\Gamma_j$-equivariant function $f \from H({\Gamma_j}^{\Gamma_j}) \to
  H((\fpGammaiI)^{\fpGammaiI})$ so that $\ran(f) \subset A_j$. 
\end{lemma}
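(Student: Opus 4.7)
The plan is to induct on $|I|$ when $I$ is finite, using Lemma \ref{hyp_lemma} as the base case, and to handle $I = \N$ by a direct game-theoretic adaptation of the proof of Lemma \ref{hyp_lemma}. For the base case $|I| = 2$, applying Lemma \ref{hyp_lemma} to $A := A_1$ produces either a $\Gamma_1$-equivariant injection into $A_1$ (take $j = 1$) or a $\Gamma_2$-equivariant injection into $H((\Gamma_1 * \Gamma_2)^{\Gamma_1 * \Gamma_2}) \setminus A_1 = A_2$ (take $j = 2$).

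For the inductive step with $|I| = n + 1 \geq 3$, fix any $i_0 \in I$ and view $\fpGammaiI = \Gamma_{i_0} * \Delta$, where $\Delta := \fp_{i \in I \setminus \{i_0\}} \Gamma_i$. Apply Lemma \ref{hyp_lemma} to $A := A_{i_0}$: either we obtain the desired map with $j = i_0$, or we obtain an injective Borel $\Delta$-equivariant $g \from H(\Delta^\Delta) \to H((\fpGammaiI)^{\fpGammaiI})$ with $\ran(g) \cap A_{i_0} = \emptyset$, so $\ran(g) \subset \bigunion_{i \neq i_0} A_i$. In the latter case the Borel sets $B_i := g^{-1}(A_i)$ for $i \in I \setminus \{i_0\}$ form a Borel partition of $H(\Delta^\Delta)$, and the inductive hypothesis produces some $j \in I \setminus \{i_0\}$ and an injective Borel $\Gamma_j$-equivariant $h \from H(\Gamma_j^{\Gamma_j}) \to H(\Delta^\Delta)$ with $\ran(h) \subset B_j$. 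Since $\Gamma_j$ embeds canonically into $\Delta$, the composition $g \circ h$ is injective, Borel, and $\Gamma_j$-equivariant, with range in $A_j$.

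For the countably infinite case $I = \N$, I would instead mimic the game-theoretic argument of Lemma \ref{hyp_lemma} directly rather than iterate. Concretely, I would define a two-player Borel game on $(\fpGammaiI)^{\fpGammaiI}$ whose plays are finite injective one-orbit partial functions, with each move tagged by an index $i \in \N$ that specifies which $\Gamma_i$-words the current player may extend; the tagging follows a fixed schedule on $\N$ so that every $i$ is eligible for extension infinitely often, and the winning conditions are engineered so that a winning strategy pins down a specific $j \in \N$. Borel determinacy then selects a winner, and the corresponding $f \from H(\Gamma_j^{\Gamma_j}) \to H((\fpGammaiI)^{\fpGammaiI})$ with $\ran(f) \subset A_j$ is assembled exactly as in Lemma \ref{hyp_lemma}: run countably many parallel instances of the game indexed by elements of $\Gamma_j$ and glue them using a witness to the hyperfiniteness of $E_{\Gamma_j}$. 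The main obstacle lies in this countable case: the game rules and scheduling must be designed so that a winning strategy identifies a single index $j$, and so that totality, freeness, $\Gamma_j$-equivariance, and injectivity of $f$ all remain verifiable in the presence of countably many move-types; the finite induction step is by contrast routine once one observes that $\Gamma_j$-equivariance of $h$ lifts through the embedding $\Gamma_j \hookrightarrow \Delta$ to $\Gamma_j$-equivariance of the composition $g \circ h$.
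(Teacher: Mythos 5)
Your induction for finite $I$ is fine, and is a legitimate (slightly different) route for that case: pulling the partition back along the $\Delta$-equivariant injection $g$ and composing with the map from the inductive hypothesis does give an injective Borel $\Gamma_j$-equivariant map into $A_j$, since $\Delta$-equivariance of $g$ restricts to $\Gamma_j$-equivariance along the inclusion $\Gamma_j \hookrightarrow \Delta = \fp_{i \neq i_0} \Gamma_i$. But the case $I = \N$ is the real content of the lemma, and your sketch does not prove it; you concede the ``main obstacle'' without resolving it. A single game with index-tagged moves and a schedule is not obviously workable: in the game of Lemma~\ref{hyp_lemma} one player is responsible for \emph{all} words beginning in one factor and the other player for all words beginning in the other, and it is exactly this global division of roles that makes a winning strategy yield a map equivariant with respect to one fixed factor. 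With countably many factors and $j$ unknown in advance, your tagging scheme does not specify who is responsible for which factor over the whole run, so it is unclear that a winning strategy pins down any single $j$ or produces a $\Gamma_j$-equivariant map. Note also that your finite induction cannot simply be iterated to handle $I = \N$: the iteration may never terminate, leaving no index $j$ at the end.

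The paper's proof (following the way Theorem 1.2 of \cite{M2} generalizes Lemma 2.1 of \cite{M1}) keeps the two-player game of Lemma~\ref{hyp_lemma} unchanged but runs \emph{countably many instances} of it, one per index: for each $j$ one views $\fpGammaiI$ as the free product of $\Gamma_j$ and $\fp_{i \neq j} \Gamma_i$ and plays the game with payoff $A_j$ (for one distinguished index the complement is used, with the roles of the two players accordingly exchanged). The key idea you are missing is the simultaneous-play argument that selects the good index: if for every $j$ the player whose victory would produce the desired $\Gamma_j$-equivariant map into $A_j$ fails to have a winning strategy, then by Borel determinacy the opposite players all have winning strategies, and these countably many strategies can be played against one another consistently; the common outcome is a single element of $H((\fpGammaiI)^{\fpGammaiI})$ lying in no $A_i$, contradicting that $\{A_i\}_{i \in I}$ is a partition. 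Once some game has the right winner, one copies the construction from the proof of Lemma~\ref{hyp_lemma} verbatim. Your proposal needs this diagonal argument (or a worked-out substitute) to count as a proof of the countable case.
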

\begin{proof}[Proof Sketch.]

This lemma can be proved in a roughly identical way to the way
{\cite[Theorem 1.2]{M2}} generalizes {\cite[Lemma 2.1]{M1}}. 
Similarly to that proof, either player I has a winning strategy in the game
above
associated to the complement of $A_0$, viewing $\fpGammaiI$
as a free product of the two groups $\Gamma = \fp_{i \neq 0}
\Gamma_i$ and $\Delta = \Gamma_0$, or else there is some $j > 0$ so that player II has a winning
strategy in the game associated to $A_j$, viewing
$\fpGammaiI$ as a free product of the two groups $\Gamma = \Gamma_j$
and $\Delta = \fp_{i \neq j} \Gamma_i$. (This is because if not, playing winning
strategies for the other players in all these games simultaneously would
yield some $y \in H((\fpGammaiI)^{\fpGammaiI})$ not in any $A_i$,
contradicting the fact that $\{A_i\}_{i \in I}$ partitions this set). One
then copies the construction from the proof of Lemma~\ref{hyp_lemma} above. 
\end{proof}

In a different direction, we could work instead with a universal free
hyperfinite action of $\Gamma$ (in the sense of {\cite[Section 2.5]{JKL}} and
\cite{CK}) instead of the action we have used on $H(\Gamma^\Gamma)$. Using
this universal action, Lemma~\ref{hyp_lemma} would remain true using a very
similar proof. 

There is a different way of viewing the action of $\Gamma$ on
$H(\Gamma^\Gamma)$:

\begin{remark}
Suppose $\Gamma$ and $\Delta$ are countable discrete groups. Then $\Gamma$
acts on $\Delta^\Gamma$ via 
\[\gamma \cdot x(\gamma') = x(\gamma^{-1})^{-1} x(\gamma^{-1} \gamma').\]
Let $H'(\Delta^\Gamma)$ be the set of $x \in \Delta^\Gamma$ such that
$x(1_\Gamma) = 1_\Delta$, $x$ is a bijection, and $x \in
\Free(\Delta^\Gamma)$ and $x^{-1} \in \Free(\Gamma^\Delta)$. 
Let $E_{\Gamma,\Delta}$ be the orbit equivalence relation of
the action of $\Gamma$ on $H'(\Delta^\Gamma)$. Then it is easy to see that $E_{\Gamma,\Delta}$ and
$E_{\Delta,\Gamma}$ are Borel isomorphic via the map sending $x \in
H'(\Delta^\Gamma)$ to $x^{-1} \in H'(\Gamma^\Delta)$. Hence, $E_{\Gamma,\Delta}$ is
generated by free actions of both $\Gamma$ and $\Delta$.
If $\Gamma$ is a
countably infinite group, the action of $\Gamma$ on $H(\Gamma^\Gamma)$ is
Borel isomorphic to the action of $\Gamma$ on $H'(\Z^\Gamma)$ via the
equivariant map sending $x \in H(\Gamma^\Gamma)$ to $f(x) \in
H'(\Z^\Gamma)$ where $f(x)(\gamma)$ is the unique $n \in \Z$ such that
$(x^{n}(1))^{-1} = \gamma^{-1}$. 
\end{remark}

\section{Corollaries}

\subsection{Colorings and matchings}

The proof of Theorem~\ref{comb_thm} is identical to the proofs in
\cite{M1}, simply replacing $\Free(\N^\Gamma)$ with $H(\Gamma^\Gamma)$ in
the definition of $G(\Gamma,\N)$ in that paper, and the proofs of Theorems
1.3, 1.4, and 1.5 of \cite{M1}. 

\subsection{The local lemma}
\label{lll_subsection}

Let us begin by recalling the Borel version of the Lov\'asz local lemma in \cite{CGMPT}. 
Suppose $G$ is a Borel graph on $X$, but where we allow loops so that 
we do not assume $G$ is irreflexive.
We use
the notation $G(x) = \{y \in X : x \mathrel{G} y\}$ to denote the
neighborhood of $x$. We also let $G^{\leq 2}$ be the Borel graph 
on $X$ where $x \mathrel{G^{\leq 2}} y$ if $d_G(x,y) \leq 2$.
(This graph is called $\mathrm{Rel}(G)$ in \cite{CGMPT}). 

Suppose $b \geq 1$. Then a \define{Borel $b$-local rule} $R$ for $G$ is a
Borel function whose domain is $X$ and where for each $x \in X$, $R(x)$ is
a set of functions from $G(x)$ to $b$. Say that $f \from X \to b$
\define{satisfies}
$R$ if $f \restriction G(x) \in R(x)$ for every $x \in X$. 
Define $p_R(x)$ to be the
probability that a random function from $G(x) \to b$ is not in $R(x)$. So
$p_R(x) = 1 - \frac{|R(x)|}{b^{|G(x)|}}$.

\begin{thm}[{\cite[Theorem 1.3]{CGMPT}}]
  Suppose $G$ is a Borel graph on $X$ so that $G^{\leq
  2}$ has uniformly subexponential growth and degree bounded by $\Delta$.
  If $R$ is a Borel $b$-local rule for $G$ such that $p_R(x) < \frac{1}{e
  \Delta}$ for all $x \in X$, then there exists a Borel function $f \from X
  \to b$ which satisfies $R$. 
\end{thm}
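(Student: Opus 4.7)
The plan is to adapt the parallel Moser-Tardos resampling algorithm to the Borel setting, using uniform subexponential growth to control the global behavior. First, by the Kechris-Solecki-Todorcevic bound, the bounded-degree graph $G^{\leq 2}$ admits a Borel proper coloring $c \from X \to \Delta + 1$. Each color class is $G^{\leq 2}$-independent, so its vertices have pairwise-disjoint $G$-neighborhoods, allowing parallel resampling of values on a single color class without interference between local constraints.

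Next, enlarge $X$ to $X \times [0,1]^\N$ with the product Borel structure to provide a Borel source of auxiliary independent uniform samples at each vertex. Fix any initial Borel $f_0 \from X \to b$ and define a Borel sequence of iterates $(f_t)$ by processing colors cyclically: at step $t$, for each $x$ with $c(x) \equiv t \pmod{\Delta + 1}$ at which $R(x)$ is currently violated by $f_t \restriction G(x)$, use the next fresh auxiliary sample at $x$ to resample $f_t(x)$ uniformly in $b$. A standard Moser-Tardos witness-tree analysis then shows that for each fixed $x$, the expected number of resamplings is finite: a witness tree of size $n$ rooted at $x$ has probability at most $(e\Delta)^{-n}$ by the hypothesis $p_R(x) < 1/(e\Delta)$, while their count is bounded by $(e\Delta)^n$ times a subexponential correction, yielding a finite geometric sum.

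The main obstacle is lifting this "almost-sure stabilization at each fixed $x$" to "almost-sure stabilization simultaneously at every $x \in X$". A naive union bound fails since $X$ is uncountable. Resolution hinges on locality: the stabilization of $f_t(x)$ depends only on the auxiliary seed bits within a $G^{\leq 2}$-ball about $x$ whose radius is controlled by the depth of the relevant witness tree. Uniform subexponential growth of $G^{\leq 2}$ then lets one estimate the probability of failure at $x$ using only bits in a subexponentially small neighborhood, and a shattering argument (in the spirit of distributed LLL analyses) produces a Borel set of seeds, of full product measure, on which $(f_t)$ stabilizes pointwise at every $x \in X$. Fixing any such seed, the pointwise limit $f(x) = \lim_t f_t(x)$ is a Borel function of $x$ satisfying $R$ everywhere. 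The crux of the argument, and the essential use of the strict inequality $p_R(x) < 1/(e\Delta)$ together with the uniform $e^{o(n)}$ growth of balls, is providing the slack needed to make the shattering and uniformization step work; this is the genuinely new ingredient beyond the classical probabilistic LLL.
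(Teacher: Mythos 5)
This statement is not proved in the paper at all --- it is quoted from \cite{CGMPT} --- so there is no internal proof to compare with; judging your sketch on its own terms, it has a genuine gap at exactly the step you yourself label ``the crux.'' Once you give each vertex its own auxiliary randomness, a ``seed'' is an assignment of a point of $[0,1]^{\N}$ to \emph{every} vertex, i.e.\ a point of the uncountable product $([0,1]^{\N})^{X}$. The event ``the iteration stabilizes at every $x \in X$'' depends on uncountably many coordinates, so it is not measurable for the product $\sigma$-algebra, and ``a Borel set of seeds of full product measure on which $(f_t)$ stabilizes at every $x$'' is not meaningful as stated; the ``shattering argument'' is named but never supplied. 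More fundamentally, to extract a Borel $f \from X \to b$ at the end you would need the chosen seed to be a Borel function of $x$ (since $f_t(x)$ depends on the seeds of a whole neighborhood of $x$), but a Borel seed assignment is a single deterministic point of the seed space, about which the Moser--Tardos analysis says nothing --- it only controls almost every seed --- while a random seed assignment is almost surely not Borel in $x$. Fixing a good seed by choice therefore gives a function satisfying $R$ everywhere that need not be Borel, and running the argument fiberwise on $X \times [0,1]^{\N}$ (or on a skew-product extension when $G$ comes from a group action) gives at best a measurable solution modulo a null set, which was already known and is strictly weaker than the Borel, everywhere conclusion. Bridging precisely this gap is the content of the theorem.

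A second symptom that the sketch cannot be right as written: the uniform subexponential growth hypothesis enters your argument only as a ``subexponential correction'' to the witness-tree count, where it is not needed (witness-tree counts are controlled by the degree bound $\Delta$ alone), and nowhere else in an essential way. But any argument of this shape that does not use the growth hypothesis essentially is ruled out by Theorem~\ref{lll_thm} of this very paper: there are hyperfinite bounded degree Borel graphs for which the Borel local lemma fails, even though the fiberwise Moser--Tardos analysis applies to them verbatim. In \cite{CGMPT} the growth assumption is used for a genuinely different purpose --- to make a deterministic, exhaustive local construction work, with union bounds taken over balls of subexponential size rather than over random seeds --- and that derandomization step, absent from your proposal, is where the actual work lies.
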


Theorem~\ref{lll_thm} clearly follows from the following:

\begin{lemma}
  Suppose $n \geq 6$, and 
  let $S$ be a free symmetric generating set for $\F_{2n}$, which acts on
  the space $H(\F_{2n}^{\F_{2n}})$ via Definition~\ref{action_defn}. Let 
  $G$ be the graph on $H(\F_{2n}^{\F_{2n}})$
  where $x \mathrel{G} y$ if there exists
  $\gamma \in S \union \{1\}$ such that $\gamma \cdot x = y$. 
  Then there
  exists a Borel $2$-local rule $R$ for $G$ such that $p_R(x) < \frac{1}{e
  \Delta}$ for all $x$, however there is no Borel function $f$ which
  satisfies $R$. 
\end{lemma}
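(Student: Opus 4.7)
My plan is to design a $2$-local rule whose pointwise failure probability decays like $2^{-2n}$ — well below the $\tfrac{1}{e\Delta}$ threshold of the Borel local lemma, since $\Delta$ is only polynomial in $n$ — while arranging that any global Borel solution would, via Lemma~\ref{ctbl_version}, give a Borel proper $2$-coloring of the $t$-Schreier graph of $\Z$ on $H(\Z^\Z)$. That graph is precisely the $d=2$, $k=3$ witness from Theorem~\ref{comb_thm}(1), whose Borel chromatic number is $3$, which delivers the contradiction.

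Let $t_1, \dots, t_{2n}$ be the free generators of $\F_{2n}$, so $S = \{t_i^{\pm 1} : i = 1, \dots, 2n\}$. I would take
\[
R(x) = \bigl\{ h \from G(x) \to 2 : \exists\, i \in \{1, \dots, 2n\},\ h(t_i \cdot x) \ne h(t_i^{-1} \cdot x) \bigr\}.
\]
The forbidden configurations at $x$ — those $h$ with $h(t_i x) = h(t_i^{-1} x)$ for every $i$ — form an event of probability $2^{-2n}$ under a uniformly random $h$, since the $2n$ pair-equalities are independent and each occurs with probability $1/2$. Because the $\F_{2n}$-action on $H(\F_{2n}^{\F_{2n}})$ is free, $|G^{\leq 2}(x)| = 1 + 4n + 4n(4n-1) = 16n^2 + 1$, and for every $n \geq 6$ one checks $2^{-2n} < 1/(e(16n^2 + 1))$ directly, so the LLL hypothesis is satisfied.

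Assuming toward contradiction that a Borel $f \from H(\F_{2n}^{\F_{2n}}) \to 2$ satisfies $R$, I set $B_i = \{x : f(t_i x) \ne f(t_i^{-1} x)\}$; the rule forces $\bigcup_{i=1}^{2n} B_i = H(\F_{2n}^{\F_{2n}})$, and I refine this cover to the Borel partition $A_i = B_i \setminus \bigcup_{j<i} B_j$. Viewing $\F_{2n}$ as the free product $\Gamma_1 * \cdots * \Gamma_{2n}$ with each $\Gamma_i = \langle t_i \rangle \cong \Z$, Lemma~\ref{ctbl_version} supplies some $j$ and a Borel $\Gamma_j$-equivariant injection $\phi \from H(\Z^\Z) \to H(\F_{2n}^{\F_{2n}})$ with $\ran(\phi) \subset A_j$. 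The pullback $g = f \circ \phi$ then inherits, from $\phi(z) \in B_j$ and the equivariance $\phi(t_j^{\pm 1} z) = t_j^{\pm 1} \phi(z)$, the identity $g(t_j z) \ne g(t_j^{-1} z)$ for every $z$, equivalently $g(t_j^2 z) \equiv g(z) + 1 \pmod 2$.

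The last step is to turn this antiperiodicity into a genuine Borel proper $2$-coloring of the $t_j$-Schreier graph on $H(\Z^\Z)$: set $h(z) = g(z) + g(t_j z) \pmod 2$ and compute $h(t_j z) = g(t_j z) + g(t_j^2 z) \equiv g(t_j z) + g(z) + 1 \equiv h(z) + 1 \pmod 2$, so $h(t_j z) \ne h(z)$ for all $z$. Since the $t_j$-Schreier graph on $H(\Z^\Z)$ is the hyperfinite, acyclic, $2$-regular graph witnessing the $k = 3$ case of Theorem~\ref{comb_thm}(1), no such Borel $h$ can exist. The main obstacle I foresee is the rule design itself: the condition must be so mild that $p_R$ is exponentially small in $n$ — only forbidding an event that involves all $2n$ pairs simultaneously — and yet its pullback must still say enough about $g$ along $t_j$ that the XOR-with-$t_j$ trick converts it into a proper $2$-coloring of a graph we have already shown needs three Borel colors.
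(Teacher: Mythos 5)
Your rule, its failure probability $p_R(x)=2^{-2n}$, the degree bound $1+(4n)^2$ for $G^{\leq 2}$, and the application of Lemma~\ref{ctbl_version} to the partition $\{A_i\}$ with $\F_{2n}=\langle t_1\rangle\ast\cdots\ast\langle t_{2n}\rangle$ are all fine, as is the algebra producing a Borel $h\from H(\Z^\Z)\to 2$ with $h(t_j\cdot z)\neq h(z)$ for all $z$. The gap is the very last step: you dismiss such an $h$ by asserting that the Schreier graph of the $\Z$-action on $H(\Z^\Z)$ ``is the graph witnessing the $k=3$ case of Theorem~\ref{comb_thm}(1).'' Theorem~\ref{comb_thm}(1) is only an existence statement; it does not say that this particular graph has Borel chromatic number $3$, and the witness produced by the paper's method (following \cite{M1}) is a different graph, built from actions of free products such as $(\Z/2\Z)\ast(\Z/2\Z)$, not from the single group $\Z$ acting on $H(\Z^\Z)$. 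Nor can the needed fact be extracted from Lemma~\ref{hyp_lemma} or Lemma~\ref{ctbl_version}, since those lemmas require a free-product decomposition into at least two nontrivial factors and say nothing about partitions of $H(\Z^\Z)$ itself. So the statement ``there is no Borel proper $2$-coloring of the $t_j$-Schreier graph on $H(\Z^\Z)$'' (equivalently, no Borel $A$ with $1\cdot A=H(\Z^\Z)\setminus A$) is an unproved claim in your argument. It is in fact true, but it needs its own proof --- for instance, a Baire-category argument: $H(\Z^\Z)$ is comeager in the Polish space of bijections of $\Z$, the action is conjugation by translations, every finite-index subgroup of $\Z$ acts topologically transitively there, and a $2$-coloring would give a nonmeager-comeager contradiction between $A$ and $1\cdot A$. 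Without some such argument your proof is incomplete.

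For contrast, the paper avoids ever needing a chromatic-number fact about $H(\Z^\Z)$: it splits $S$ into symmetric halves $S_0,S_1$ generating copies $\Gamma_0,\Gamma_1$ of $\F_n$ with $\Gamma_0\ast\Gamma_1=\F_{2n}$, and uses the rule that a point colored $0$ must have an $S_0$-neighbor colored $1$ and a point colored $1$ must have an $S_1$-neighbor colored $0$. This has the same failure probability $2^{-2n}$, and a Borel $f$ satisfying it is refuted directly by Lemma~\ref{hyp_lemma} applied to $f^{-1}(1)$: one gets either a full $\Gamma_0$-orbit colored $0$ or a full $\Gamma_1$-orbit colored $1$, an immediate contradiction, with both factors infinite so that $H(\Gamma_i^{\Gamma_i})\neq\emptyset$. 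You could either switch to a rule of this shape (so the contradiction comes straight from the main lemma) or keep your rule and supply the missing non-$2$-colorability argument for the $\Z$-action on $H(\Z^\Z)$.
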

\begin{proof}
  Partition the generating set $S$ into two symmetric sets $S_0$ and $S_1$ so that $S_0$ and
  $S_1$ generate two isomorphic copies $\Gamma_0$ and $\Gamma_1$ of $\F_n$,
  where $\Gamma_0 * \Gamma_1 = \F_{2n}$.
  
  Now let $R$ be the local rule where $f \in R(x)$ if 
  $f(x) = 0$ implies there is a $\gamma \in S_0$ such that $f(\gamma \cdot
  x) = 1$ and $f(x) = 1$ implies there is a $\gamma \in S_1$ such that
  $f(\gamma \cdot x) = 0$. 
  By Lemma~\ref{hyp_lemma}, for every Borel function 
  $f \from H(\F_{2n}^{\F_{2n}}) \to 2$, by viewing $f$ as the
  characteristic function of some set, there is either an entire
  $\Gamma_0$-orbit whose image is $\{0\}$ or a $\Gamma_1$-orbit whose image
  is $\{1\}$. Hence, there can be no Borel function $f$ satisfying $R$.
  
  However, for every $x$, we have $p_R(x) = 1/2^{2n}$ and the graph 
  $G^{\leq 2}$ has degree $1 + (4n)^2$. To finish, note that 
  \[\frac{1}{2^{2n}} < \frac{1}{e (1 + (4n)^2)}\]
  for $n \geq 6$. 
\end{proof}

\subsection{An exotic finitely additive invariant Borel measure}

\begin{proof}[Proof of Theorem~\ref{exotic_fin_measure}.]

  Consider the action of $\Gamma = (\Z/2\Z)^{* 3} =
  \<a,b,c:a^2=b^2=c^2=1\>$ on $X = H(\Gamma^\Gamma)$. $X$ is a Borel subset
  of the Polish space $\Gamma^\Gamma$, and so by changing topology, we may
  give a Polish topology to $X$ that has the same Borel sets but so that
  the action of $\Gamma$ on $X$ is continuous. Since $\Gamma$ is
  nonamenable and a free probability measure preserving action of a
  nonamenable group on a standard probability space $(X,\mu)$ cannot be
  $\mu$-hyperfinite, this action does not admit any countably additive
  invariant Borel probability measure. 
  
  Let $\mathcal{B}(X)$ be the $\sigma$-algebra of Borel subsets of $X$. Now
  $\mathcal{B}(X)$ is invariant under the action of $\Gamma$ and hence by
  {\cite[Theorem 9.1]{W}} there is a finitely additive $\Gamma$-invariant
  probability measure $\nu \from \mathcal{B}(X) \to [0,1]$ with $\mu(X) = 1$ if and
  only if $n+1$ copies of $X$ are not Borel equidecomposable with a subset
  of $n$ copies of $X$. So it suffices to show that for all $n \in \N$ and
  finite sets $S \subset \Gamma$ there do not exist $n+1$ Borel functions
  $f_0, \ldots, f_n$ such that for all $x \in X$ and $i \leq n$, $f_i(x) =
  \gamma \cdot x$ for some $\gamma \in S$ and for every $y \in X$, $\{(z,i)
  : f_i(z) = y\}$ has at most $n$ elements. 

  Suppose for a contradiction that there did exist such a finite set $S
  \subset \Gamma$ and Borel functions $f_0, \ldots, f_n$ as above. Let $G$ be the Borel graph on $X$
  where $x \mathrel{G} y$ if there is a generator $\gamma \in \{a^{\pm 1},
  b^{\pm 1}, c^{\pm 1}\}$ such that $\gamma \cdot x = y$. Note that $G$ is
  acyclic (since the action of $\Gamma$ is free, and the Cayley graph of
  $\Gamma$ with respect to its generators is acyclic), so there is a unique
  path between any two points in $G$ in the same connected component. Our
  rough idea is that the graph $G$ does not have a Borel antimatching (in
  the terminology of \cite{M1}) as one can easily see from
  Lemma~\ref{hyp_lemma}. However, one can construct a Borel antimatching
  assuming the existence of these functions $f_0, \ldots, f_n$. 

  Let $g$ be the
  Borel function which associates to each directed edge $(x,y)$ of $G$ the number of
  pairs of the form 
  $(z,i)$ where $z \in X$ and $i \leq n$ and the unique
  $G$-path from $z$ to $f_i(z)$ includes the directed edge $(x,y)$. Note that since $S$ is
  finite and $G$ has bounded degree, $g$ is bounded above. Now we
  claim that for every $x \in X$, there is some neighbor $y$ of $x$ such that
  $g((x,y)) > g((y,x))$. To see this, consider the quantity
  \[\sum_{\{y : y \mathrel{G} x\}} g((x,y)) - g((y,x)).\]
  Take a pair $(z,i)$ that contributes to this sum because the
  path from $z$ to $f_i(z)$ includes $x$. If $x \neq z$ and $x \neq f_i(z)$,
  then this path has one edge directed towards $x$ and one away from $x$,
  so the net contribution to the sum is zero.
  If $z = x$, then there are exactly $n+1$ pairs of the form $(x,i)$,
  and so $n+1$ edges directed away from $x$. However, if $f_i(z) = x$, then
  by assumption there are at most $n$ pairs of the form $(z,i)$ such that
  $f_i(z) = x$. Hence the total sum is positive, and so there must be some
  $y$ such that $g((x,y)) - g((y,x))$ is positive.
  
  Let $<$ be a Borel linear ordering of $X$. We now define a Borel function
  $h \from X \to X$ by setting 
  $h(x) = y$ where $y$ is the $<$-least neighbor of $x$
  such that $g((x,y)) - g((y,x)) > 0$. Note that $h^2(x) \neq x$ for every
  $x$. Now let $A_{\gamma} = \{x : h(x) = \gamma \cdot x\}$ for $\gamma
  \in \{a,b,c\}$ so these sets partition $H(\Gamma^\Gamma)$. Finally, by
  applying Lemma~\ref{hyp_lemma} twice (or Lemma~\ref{ctbl_version} once),
  there must be some $\gamma \in
  \{a,b,c\}$ so that if $\<\gamma\>$ is the subgroup generated by $\gamma$,
  there is a Borel injective $\<\gamma\>$-equivariant function $f \from
  H(\<\gamma\>^{\<\gamma\>}) \to A_\gamma \subset H(\Gamma^\Gamma)$. But
  any $y \in \ran(f)$ has $h(y) = \gamma \cdot
  y$ and $h(\gamma \cdot y) = y$, since both $y$ and $\gamma \cdot y$ are
  in $A_\gamma$. This contradicts the fact that $h^2(x) \neq x$ for all $x \in
  X$.
\end{proof}

\subsection{An ultrafilter on $\R/E_0$}

\begin{proof}[Proof of Theorem~\ref{uf}.]
  Instead of $E_0$, we will construct the ultrafilter $U$ on the equivalence
  relation $E_{\F_2}$ on $H(\F_2^{\F_2})$. Since $E_{\F_2}$ is hyperfinite,
  by \cite{DJK} it is Borel bireducible with $E_0$ restricted to some
  Borel subset of $2^\N$. Hence
  our construction will also yields an ultrafilter on the quotient of
  $E_0$.
  
  Fix $C$ as in the definition of the proof of Lemma~\ref{hyp_lemma} where
  $\Gamma = \Delta = \Z$ so $\Gamma * \Delta = \F_2$.
  Given an $E_{\F_2}$-invariant subset $A \subset H(\F_2^{\F_2})$, we
  define $A \in U$ if and only if player II wins the game $G_{A \union C}$ defined in
  the proof of Lemma~\ref{hyp_lemma}. 
  The proof that this defines an
  ultrafilter is identical to the proof of {\cite[Lemma 4.9]{M2}}.

  It is trivial to see that given a winning strategy for player II in the game
  $G$, then there are plays of the game using this winning strategy of
  every Turing degree above the Turing degree of this strategy. 
  Hence, given any subset of
  $A \subset H(\F_2^{\F_2})$ which is Turing invariant, $A$ is in the
  ultrafilter $U$ if and only if $A$ contains a Turing cone. Thus, $U$ is
  Rudin-Kiesler above Martin's measure, as witnessed by the identity
  function (which is a homomorphism from $E_{\F_2} \restriction
  H({\F_2}^{\F_2})$ to $\equiv_T$ on ${\F_2}^{\F_2}$, which we can identify
  with $\baire$). 
\end{proof}

\section{Lower bounds on component size in witnesses to hyperfiniteness}

We begin with a lemma about forward recurrent sets for bounded-to-one Borel
functions. 

\begin{lemma}\label{bfr}
  Suppose $f \from X \to X$ is a bounded-to-one Borel function. Then
  there is a Borel set $A \subset X$ such that $x \in A \implies f(x)
  \notin A$, and for all $x \in X$, one of $x, f(x), f^2(x), f^3(x)$ is in
  $A$. 
\end{lemma}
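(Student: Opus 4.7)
The plan is to reformulate the problem: construct a Borel function $\phi \from X \to \{0, 1, 2, 3\}$ satisfying $\phi(f(x)) = \phi(x) - 1$ when $\phi(x) \geq 1$, and $\phi(f(x)) \in \{1, 2, 3\}$ when $\phi(x) = 0$; then $A := \phi^{-1}(0)$ meets both requirements of the lemma. Along each forward $f$-orbit, such a $\phi$ traces out a pattern of $A$-elements with consecutive gaps in $\{2, 3, 4\}$. Note that any fixed point of $f$ is incompatible with such a $\phi$ (and indeed with the conclusion of the lemma itself), so the statement implicitly requires $f$ to be fixed-point-free.

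For each $n \geq 2$, the set $P_n$ of points of minimal period $n$ is Borel and admits a Borel transversal $T_n$; for each $x \in T_n$, I would define $\phi$ explicitly on the $n$-cycle of $x$ using a cyclic pattern of gaps drawn from $\{2, 3, 4\}$ summing to $n$ (which exists for every $n \geq 2$). For the aperiodic part, I would use the hyperfiniteness of the tail equivalence relation $E_f$ (which holds for any Borel function $f$), writing $E_f = \bigunion_m F_m$ with increasing finite Borel equivalence relations $F_m$ and refining so that each $F_m$-class is $f$-convex (closed under intermediate $f$-iterates between any two of its members). I would define $\phi$ inductively on the $F_m$-classes; at each stage, for a newly added element $y$ whose image $f(y)$ has already been assigned $\phi(f(y)) = k$, the compatible choices are $\phi(y) = 0$ (permitted exactly when $k \neq 0$) or $\phi(y) = k + 1$ (permitted exactly when $k \leq 2$), so at least one option always remains.

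The main obstacle is ensuring the inductive extension is globally consistent and produces a Borel $\phi$. Two tensions must be handled: the recursion propagates $\phi$-values \emph{forward} along $f$, so the induction must extend \emph{backward} from already-assigned elements to their preimages; and when two previously-handled $F_{m-1}$-classes are merged into a common $F_m$-class, the pre-existing $\phi$-values at the new internal boundary may be mutually inconsistent. I would address the first by arranging the $F_m$'s so that whenever a new element enters at stage $m$, its $f$-image is already handled (using $f$-convexity to guarantee that the ``downstream'' parts of each orbit are processed first), and the second by using the three-way flexibility at each reset point to locally adjust $\phi$ within the newly merged class, fixing canonical choices via a Borel linear order on $X$. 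A parallel alternative route, which may be cleaner in execution, is to take a Borel proper coloring of the bounded-degree graph $x \sim y \iff d_{G_f}(x, y) \leq 4$ (available by Kechris--Solecki--Todorcevic) and iterate through color classes greedily, at each stage adding the current-color vertices whose backward windows would otherwise remain undominated, provided doing so does not create a consecutive $A$-pair.
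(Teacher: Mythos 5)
Your reformulation via the counter $\phi$ (equivalently $\phi(x)=\min\{j\ge 0: f^j(x)\in A\}$), the treatment of cycles, and the fixed-point caveat are all fine, but the heart of the argument is exactly the part you leave unresolved, and as described it does not work. The problem is pointwise stabilization. When two $F_{m-1}$-classes merge, repairing the junction is not automatically a local operation: if $\phi(f(y))=k$ is already fixed downstream, the legal values for $\phi(y)$ may be incompatible with every previously assigned value at the preimages of $y$, and those must then be changed, and so on up the tree. For instance, with old values reading $\ldots,3,2,1,0,3,2,1,0$ in the direction of $f$ and a junction mismatch of reset type (a $0$ followed by a $0$), the forced new value at $y$ is $1$ and the change necessarily propagates several steps upstream; with careless choices (always incrementing) it shifts the phase of the whole upstream countdown and runs through the entire merged class. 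Your proposed remedy, ``locally adjust $\phi$ within the newly merged class,'' is therefore not enough: the classes grow with $m$, a fixed point $x$ lies in infinitely many of them, and new junctions keep appearing downstream of $x$, so without a proof that $x$'s value is changed at only finitely many stages the limit $\phi$ need not exist, let alone be Borel. What is actually needed (and provable, but absent from your sketch) is a finite interpolation lemma saying that any junction constraint can be met by modifying $\phi$ only within a universally bounded distance $D$ of the junction edge (using the slack between gaps $2,3,4$), combined with the observation that each point is within distance $D$ of only finitely many junction edges over all stages; ``within the merged class'' does not deliver this. Your alternative route has a parallel gap: ``vertices whose backward windows would otherwise remain undominated'' is not a well-defined condition at an intermediate stage of the greedy, and note that mere maximality is useless here --- a maximal $G_f$-independent set can fail the conclusion because the guaranteed neighbor in $A$ may be a sibling preimage rather than a point of the forward orbit. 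Finally, your first route leans on the hyperfiniteness of the tail equivalence relation of $f$ (true, by Dougherty--Jackson--Kechris, but a heavy import for this statement).

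For comparison, the paper's proof avoids all patching: take a Borel proper $3$-coloring $c$ of $G_f$ (Kechris--Solecki--Todorcevic) and let $A$ consist of those $x$ with $(c(x),c(f(x)))\in\{(0,2),(1,2)\}$ or $(c(x),c(f(x)),c(f^2(x)))=(0,1,0)$; independence and the four-window property are a short case check on color sequences. If you want to salvage your second route, it essentially turns into this: the winning move is to make membership in $A$ an explicit local function of auxiliary Borel data along the forward orbit, rather than to stitch together partial solutions along a hyperfinite exhaustion.
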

\begin{proof}
  By {\cite[Corollary 5.3]{KST}}, there is a Borel $3$-coloring $c \from X
  \to 3$ of $G_f$. Let $A$ be the set of $x \in X$ such that either 
  $c(x) = 0 \land c(f(x)) = 2$ or $c(x) = 1 \land
  c(f(x)) = 2$ or $c(x) = 0 \land c(f(x)) = 1 \land c(f^2(x)) = 0$. 
  It is easy to see that $A$ is $G_f$-independent. Given $x \in X$, the
  sequence $c(x), c(f(x)), c(f^2(x)), \ldots$ begins with either 
  \begin{itemize}
  \item 01, which continues 010 so $x \in A$ or 012 so $f(x) \in A$.
  \item 02, so $x \in A$.
  \item 10, which continues 102 so $f(x) \in A$ or 1012 so $f^2(x) \in A$ or 1010 so
  $f(x) \in A$. 
  \item 12, so $x \in A$. 
  \item 20, which continues 202 so $f(x) \in A$ or 2010 so $f(x) \in A$ or 2012
  $f^2(x) \in A$.
  \item 21, which continues 212 so $f(x) \in A$ or 2102 so $f^2(x) \in A$
  or 21010 so $f^2(x) \in A$ or 21012 so $f^3(x) \in A$.
  \end{itemize}
\end{proof}

We note that in general to answer Question~\ref{q1} positively for a Borel
graph $G$, it suffices to obtain a positive answer for an induced subgraph of $G$
which has exactly one connected component in each connected component of the original graph. 

\begin{lemma}\label{subgraph}
  Suppose $G$ is a bounded degree Borel graph on $X$, and $A \subset X$ is a Borel set
  such that every connected component of $G$ contains exactly one
  connected component of $G \restriction A$. Then if Question~\ref{q1} has
  a positive answer for $G \restriction A$, then Question~\ref{q1} has a
  positive answer for $G$. 
\end{lemma}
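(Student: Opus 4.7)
The strategy is to combine the given witness $H_n$ for $G \restriction A$ --- writing $F_n$ for the equivalence relation on $A$ whose classes are the connected components of $H_n$ --- with a Borel hyperfiniteness witness $R_1 \subset R_2 \subset \cdots$ for $G$ itself (which exists, since Question~\ref{q1} presumes that $G$ is hyperfinite) in order to produce a witness $G_n$ for $G$ all of whose components have size at least $n$. Fix a Borel linear order $<$ on $X$ to resolve all choices below.

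The plan is to refine each $R_n$ into a Borel finite-class equivalence relation $R'_n$ on $X$ such that every $R'_n$-class meets $A$ and has its $A$-part contained in a single $F_n$-class; and then, for each $F_n$-class $C$, to glue together the (finitely many) $R'_n$-classes whose $A$-part lies in $C$. The refinement of $R_n$ has two stages. First, since each $G$-component of $X$ contains a component of $G \restriction A$, every $R_n$-component $R$ disjoint from $A$ may be Borel-canonically merged with the $R_n$-component containing the $<$-least $a \in A$ in the same $G$-component that minimizes $d_G(R, a)$; this uses only the bounded degree of $G$ and the finiteness of $R_n$-components. Second, each resulting class is split by the $F_n$-clusters it meets, by running a simultaneous breadth-first search within the class from each cluster $R \cap C$ and assigning every vertex to the first cluster to reach it (with ties broken by $<$); the result is a partition into $G$-connected territories, each containing the $A$-vertices from exactly one $F_n$-class. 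Call the resulting Borel finite-class equivalence relation $R'_n$.

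For each $R'_n$-class $R'$, let $\phi_n(R')$ denote the unique $F_n$-class meeting $R' \cap A$, and declare $x \mathrel{E_n} y$ if and only if $\phi_n([x]_{R'_n}) = \phi_n([y]_{R'_n})$. Let $G_n := \{(x, y) \in G : x \mathrel{E_n} y\}$. Each $E_n$-class is a union of at most $|C|$ many $R'_n$-classes (the map sending an $R'_n$-class to its $A$-part is injective into the corresponding $F_n$-class $C$), each finite, and hence is itself finite; it contains the full $F_n$-class $C$, so it has cardinality at least $|C| \geq n$; and it is $G$-connected via the $H_n$-connected skeleton $C$, together with the fact that each of its $R'_n$-classes is $G$-connected and meets $C$. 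The identity $\bigunion_n G_n = G$ follows because any edge of $G$ eventually lies within a single $R_n$-component (hence a single $R'_n$-class), and any two endpoints of a $G$-edge project into the same component of $G \restriction A$, hence eventually the same $F_n$-class.

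The main technical obstacle is arranging that the refinements $R'_n$ are nested in $n$, so that $E_n \subset E_{n+1}$ and thus $G_n \subset G_{n+1}$. The naive orphan-merging may send two orphan components at level $n$ to distinct $R_{n+1}$-components, breaking nestedness. This is handled by building $R'_n$ inductively: at stage $n+1$ one operates only within the coarser partition given by $R_{n+1}$, reusing the merger choices at stage $n$ whenever compatible and otherwise merging further. Since both the merging rule and the territorial BFS split are driven by the single fixed order $<$, and since coarsening $R_n$ to $R_{n+1}$ and $F_n$ to $F_{n+1}$ can only coarsen the outcome, a careful inductive argument using bounded degree shows that $R'_n \subset R'_{n+1}$, completing the verification.
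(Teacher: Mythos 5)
The plan breaks at the first refinement step, and the failure is not a fixable technicality. In your merging stage you attach every $R_n$-component disjoint from $A$ to the $R_n$-component of its chosen nearest point of $A$, and you assert that bounded degree plus finiteness of the $R_n$-classes keeps the merged classes finite. It does not: infinitely many orphan components can select the same target. Concretely, let $G$ be (fiberwise over $2^\N$, to keep the space standard Borel) a two-sided line $A \cong \Z$ with an infinite ray $r_1, r_2, \ldots$ attached at the vertex $0$. This graph has degree at most $3$, every component of $G$ contains exactly one component of $G \restriction A$, and Question~\ref{q1} holds for $G \restriction A$. For any finite-class witness $R_n$ for $G$, the ray is cut into infinitely many blocks disjoint from $A$, and for each such block the unique distance-minimizing point of $A$ is $0$, so your rule merges all of them into the class of $0$, which becomes infinite. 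Worse, the goal of your stage 1 is unattainable in this example: a finite $G$-connected set that meets $A$ and contains $r_i$ must contain $r_1, \ldots, r_{i-1}$ and $0$, so in any partition into finite connected sets at most one class can meet the ray at all; and if you drop connectedness of the classes, then after gluing along $F_n$ the graph $G_n = G \cap E_n$ acquires singleton (or small) components on the ray, violating the size-at-least-$n$ requirement. (A smaller point: you also need to justify that $G$ is hyperfinite at all; it is, since $A$ is a complete section and $E_G \restriction A = E_{G\restriction A}$, but this is not literally "presumed" by the hypothesis.)

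So the essential difficulty is not the nestedness issue you flag at the end, but the vertices far from $A$: no witness can avoid components disjoint from $A$, and the real task is to make those components large rather than to eliminate them. The paper's proof does exactly this. It fixes the acyclic graph $H'$ of edges lying on lex-least paths to $A$, isolates the subgraph $H''$ of edges through which only finitely many lex-least paths pass (these span finite components by K\"onig's lemma and are kept at every stage), and at stage $i$ cuts a remaining $H'$-edge $\{x,y\}$ only when $2^i$ divides $\max(d(x,A),d(y,A))$; K\"onig's lemma then produces, beyond each cut edge, an uncut path of length $2^i - 1$, so every component missing $A$ automatically has at least $2^i$ vertices, while a component meeting $A$ contains an entire class of the given witness $G_i'$ on $A$ and hence has at least $i$ vertices. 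Note that this argument never uses a hyperfiniteness witness $R_n$ for $G$ itself; everything is built from the witness on $A$ and the path forest toward $A$. Your proposal is missing precisely this mechanism for handling points at unbounded distance from $A$.
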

\begin{proof}

Fix a Borel linear ordering of $X$. We will begin by defining a graph $H$
with the same connectedness relation as $G$.
Let $H' \subset G$ be the graph on $X$
where $x \mathrel{H'} y$ if $x \mathrel{G} y$ and 
the edge $\{x,y\}$ is contained in the lex-least path
from either $x$ to $A$ or $y$ to $A$. Using properties of the lex-least ordering, it is easy
to see that $H'$ is acyclic, and each connected component of $H'$ contains
exactly one element of $A$. Let $H$ be the union of $H'$ and $G
\restriction A$.

Let $H'' \subset H'$ be the graph where $x
\mathrel{H''} y$ if $x \mathrel{H'} y$ and there are only finitely many $z$
so that the lex-least path from $z$ to $A$ includes the edge $\{x,y\}$. By
K\"onig's lemma, all the connected components of $H''$ are finite.
Let $m(\{x,y\}) = \max(d(x,A),d(y,A))$.

Now let $G_0' \subset
G_1' \subset \ldots$ be the hypothesized witness to the hyperfiniteness of $G
\restriction A$. 
We can define a witness $H_0 \subset H_1 \subset \ldots$ to the
hyperfiniteness of $H$ by setting $x
\mathrel{H_i} y$ if (i) $x \mathrel{G_i'} y$, or (ii) 
$x \mathrel{H'} y$ and $2^{i}
\nmid m(\{x,y\})$, or (iii) $x \mathrel{H''} y$. Clearly $H_0 \subset H_1
\subset \ldots$. 
We will check that each connected component $C$ of $H_i$ is finite
and contains at least $i$ elements.

First, suppose $C$
contains no element of $A$. Then $C$ contains a unique element $x_0$ that is
closest to $A$ since $H'$ is acyclic. Let $x_1$ be the unique neighbor of
$x_0$ such that $d(x_1, A) < d(x_0, A)$. Then $x_0$ is not $H_i$-adjacent to $x_1$ by
definition, and so $x_0$ and $x_1$ are not $H''$-adjacent by (iii).  By (ii),
we therefore must have that $d(x_0, A) = k 2^i$ for some $k$. 
By (iii) there must be infinitely many $z$ such that the
lex-least path from $z$ to $A$ includes the edge $\{x_0, x_1\}$. So by
K\"onig's lemma, there is some $H'$ path of length $2^i - 1$ from $x_0$ to
some point $z$ where $d(z,A) > d(x_0,A)$ so that this path does not use any
$H''$-edges. Thus, this path lies inside $H_i$, which therefore has at
least $2^i$ many elements. This suffices since $2^i \geq i$. Now if 
$x,y \in C$ and $x \mathrel{H_i} y$ but $x$ and $y$ are not $H''$-adjacent, then 
we see that $d(x,x_0) < 2^i$ and $d(y,x_0) < 2^i$ by (ii). Thus, there are
finitely many edges in $H_i \restriction C$ coming from condition (ii), and
so $H_i \restriction C$ is the
union of these edges with the finitely many $H''$-components that are
incident to them by condition (iii). So $C$ is finite.

Second, suppose $C$ does contain an element of $A$. Then $C \inters A$ is a
connected component of $G_i'$ since each $H'$-component contains only one
element of $A$. Thus, since each $G_i'$ component has at least $i$ many
elements, $C$ also has at least $i$ many elements. Now if $x, y \in C$ and
$x \mathrel{H_i} y$ but $x$ and $y$ are not $H''$-adjacent, then $m(\{x,y\})
< 2^i$ by (ii). Hence, $H_i \restriction C$ contains finitely many edges
coming from condition (ii) and also from (i) by above, and so $H_i
\restriction C$ is the union of these edges with the finitely many
$H''$-components that are incident to them by condition (iii). So $C$ is
finite.

Finally, we can define the desired witness $G_0 \subset G_1 \subset \ldots$ to the
hyperfiniteness of $G$ by
setting $x \mathrel{G_i} y$ if $x \mathrel{G} y$ and $x$ and $y$ are in
the same connected component of $H_i$.
\end{proof}

Now given a bounded degree Borel graph $G$ on a standard Borel space $X$,
if $G' \subset G$ is a subgraph of $G$ with finite connected components,
then we can form the \define{graph minor} $G/G'$ of $G$ by the
connectedness relation of $G'$. That is, the vertex set of this minor is
the standard Borel space $X/G'$ of connected components of $G'$, and the
edge relation of $G/G'$ is defined by $[x]_{G'} \mathrel{G/G'} [y]_{G'}$ if
$[x]_{G'} \neq [y]_{G'}$ and there exists $x' \in [x]$ and $y' \in
[y]_{G'}$ such that $x \mathrel{G} y$. Let $H = G/G'$ and suppose now that
$H' \subset H$ is a subgraph of $H$ with finite connected components. Then
$H'$ naturally lifts to a subgraph of $G$ with finite connected components that
contains $G'$. That is, there is an edge in this lifted graph between $x$
and $y$ if $x \mathrel{G'} y$, or $x \mathrel{G} y$ and $[x]_G'
\mathrel{H'} [y]_G'$. In several of our proofs below, we will define
iterated sequences of graph minors in this way, which will naturally lift
to witnesses of the hyperfiniteness of the original graph.

\begin{lemma}\label{bounded_to_one}
  Suppose $f \from X \to X$ is bounded-to-one Borel function that
  induces the graph $G_f$. Then Question~\ref{q1} has a
  positive answer for $G_f$.
\end{lemma}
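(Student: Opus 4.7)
The plan is to apply Lemma~\ref{bfr} iteratively to build a nested sequence of sparse Borel skeletons, then read off the required hyperfiniteness witness from the partitions they induce.

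First, recursively construct Borel subsets $X = X_0 \supset X_1 \supset X_2 \supset \cdots$ along with bounded-to-one Borel functions $f_n \colon X_n \to X_n$: set $f_0 = f$ and, given $f_n$, apply Lemma~\ref{bfr} to obtain $X_{n+1} \subset X_n$, then define $f_{n+1}(a) = f_n^k(a)$ for the smallest $k > 0$ with $f_n^k(a) \in X_{n+1}$, so $k \in \{2, 3, 4\}$. Each $f_{n+1}$-preimage lies among at most four $f_n$-preimages, so $f_{n+1}$ stays bounded-to-one. Consequently the number of $f$-steps between consecutive $X_n$-elements along each $f$-orbit grows at least geometrically in $n$.

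Next, for each $x$ in an infinite $G_f$-component, let $\pi_n(x) = f^k(x)$ for the smallest $k \geq 1$ with $f^k(x) \in X_n$, and set $x \sim_n y$ iff $\pi_n(x) = \pi_n(y)$. Let $G_n$ consist of the $G_f$-edges joining $\sim_n$-equivalent endpoints, together with all $G_f$-edges in finite $G_f$-components of size at least $n$. Each $\sim_n$-class is a connected subtree of $G_f$ whose depth is at least the level-$n$ forward gap, hence of size at least $n$ once $n \geq 1$. Since $X_{n+1} \subset X_n$ makes $\pi_n$'s compatible, $\sim_n \subset \sim_{n+1}$, giving $G_n \subset G_{n+1}$. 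A direct computation shows that $\{x, f(x)\} \in G_n$ exactly when $f(x) \notin X_n$, so $\bigcup_n G_n = G_f$ precisely when no vertex of $\bigcap_n X_n$ has an incoming $f$-edge.

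The main obstacle is therefore arranging $\bigcap_n X_n$ to consist only of $f$-sources (vertices with empty $f$-preimage). I will exploit the freedom in Lemma~\ref{bfr}: at stage $n+1$, any pre-designated countable set of non-source points may be excluded from $X_{n+1}$ by slightly enlarging the bfr-set elsewhere to recover the affected length-four forward orbits. A Borel diagonalization over a countable generating family of non-source points then forces $\bigcap_n X_n$ into the source set. As a minor technicality, Lemma~\ref{bfr}'s use of a Borel $3$-coloring requires $f$ to be fixed-point-free; the fixed points form finite $G_f$-components of size one and are absorbed into the finite-component part of $G_n$.
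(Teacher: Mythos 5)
Your first two steps (iterating Lemma~\ref{bfr} to get nested forward-recurrent independent sets $X_n$ with induced first-return maps and geometrically growing gaps) follow the same strategy as the paper, but the step you yourself identify as the main obstacle is a genuine gap. An edge $\{x,f(x)\}$ enters some $G_n$ only when $f(x)\notin X_n$, so $\bigcup_n G_n=G_f$ forces $\bigcap_n X_n\cap\ran(f)=\emptyset$. Your proposed fix --- excluding a ``pre-designated countable set of non-source points'' and running ``a Borel diagonalization over a countable generating family of non-source points'' --- does not make sense on an uncountable standard Borel space: the non-source points form the Borel set $\ran(f)$, which is typically uncountable (all of $X$ when $f$ is a bijection), and no countable family of points can be diagonalized against to force $\bigcap_n X_n$ into its complement. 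What you would really need is a vanishing-markers strengthening of Lemma~\ref{bfr} (nested sets as above whose intersection misses $\ran(f)$, hence is usually empty), which you neither state nor prove. This is exactly where the paper diverges: it never tries to empty the leftover set. Instead it contracts the finite blocks to a quotient function $f_{i+1}$ on $X_{i+1}=X_i/G_i'$, collects the edges that are never contracted into a single set $H$, adds $H$ to \emph{every} stage, and proves that $G_i''\cup H$ still has finite components because the never-contracted edges project, at level $i+1$, to a graph $H_{i+1}$ with finite components (using independence of $A_{i+1}$). That device is what yields $\bigcup_i G_i=G_f$ with no control whatsoever over $\bigcap_n X_n$, and it is absent from your argument.

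There is a second problem, with the size bound. The claim that each $\sim_n$-class is a connected subtree of depth at least the level-$n$ gap is false: the class $\{z:\pi_n(z)=a\}$ is a forest with one tree hanging over each $f$-preimage of $a$ (two preimages of $a$ are joined in $G_f$ only through $a$, which lies in a different class), and these trees can be shallow or empty because backward chains stop at points outside $\ran(f)$. Concretely, if $v\notin\ran(f)$ and $f(v)\in X_n$, then $v$ is an isolated vertex of your $G_n$: its only incident $G_f$-edge points into $X_n$ and is excluded, and such $v$ can sit inside infinite $G_f$-components, so your finite-component clause does not absorb them. Thus requirement (2) of Question~\ref{q1} fails for $G_n$ as you defined it; the forward gap $\geq 2^n$ gives long chains inside components only when backward chains exist (e.g.\ when $f$ is surjective). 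In the paper's quotient-plus-$H$ scheme these leaves get attached to the large contracted blocks, which is how the lower bound on component size is preserved; in your scheme the failure is real rather than presentational, so this part also needs a different argument.
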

\begin{proof}
  Let $f_0 = f$ and $X_0 = X$. Given the bounded-to-one function $f_i$ on
  $X_i$, let $A_i \subset X_i$ be as in Proposition~\ref{bfr}, and let
  $G'_{i} \subset G_{f_i}$ be the graph on $X_i$ with finite connected components
  where $x \mathrel{G'_i} f(x)$ if $x \notin A_i$. Note that 
  every
  connected component of $G_f$ has size at least $2$, and size at most $1 +
  d + d^2 + d^3$, if $f_i$ is $\leq d$-to-one. 
  
  Let $X_{i+1} = X_i/{G'_{i}}$ and for each $x \in X$, let $[x]_{i+1} \in
  X_{i+1}$ be the representative of $x$ in $X_{i+1}$, so $[x]_{i+1}$ is a
  finite set of elements of $X_i$, one of which is $[x]_i$. Let
  $f_{i+1}$ be the bounded-to-one Borel function on $X_{i+1}$ where
  $f_{i+1}([x]_{i+1}) = [y]_{i+1}$ if $[x]_{i+1} \neq [y]_{i+1}$ and there
  are $[x']_i \in [x]_{i+1}$ and $[y']_i \in [y]_{i+1}$ such that
  $f_i([x']_i) = [y']_i$. Note that $G_{f_{i+1}}$ is equal to the graph
  minor $G_{f_i}/{G'_i}$. The sequence $G'_0, G'_1, \ldots$ lifts to an
  increasing union $G_0'' \subset G_1'' \subset \ldots$ of Borel graphs on
  $X$. By induction, the connected components of each $G_i''$ are finite,
  and have size at least $2^i$.

  Let $H = G_f \setminus \bigunion_i G_i''$, so that $x \mathrel{H}
  f(x)$ if and only if $[x]_i \in A_i$ for every $i$. Let $G_i$ be
  the graph on $X$ where $x \mathrel{G_i} y$ if $x \mathrel{G_i''} y$ or $x
  \mathrel{H} y$. Then clearly $G_0 \subset G_1 \ldots$, every connected
  component of $G_i$ has at least $2^i \geq i$ elements (since this is true
  of $G_i''$), and $\bigunion_i G_i = G_f$. 
  We just need to show that every connected component of $G_i$ is finite.

  Let $H_{i}$ be the graph on $X_{i}$ where $[x]_{i} \mathrel{H_{i}}
  [y]_{i}$ if $[x]_i \in A_{i}$ and $f_{i}([x]_i) = [y]_i$ or $[y]_i \in
  A_{i}$ and $f_{i}([y]_{i}) = [x]_i$. Since $f_i$ is finite-to-one,
  by the definition of $A_i$, it is easy to see that every $H_i$ class is
  finite. Now if $x \in X$, then the 
  $G_i''$-class of $x$ is $\{y
  \in X \colon [y]_{i+1} = [x]_{i+1}\}$ by the definition of $X_{i+1}$.
  Thus, the $G_i$-class of $x$ is a subset
  of $\{y \in X : [y]_{i+1} \text{ is in the same $H_{i+1}$-class as
  }[x]_{i+1}\}$, which is clearly finite since $H_{i+1}$ has finite
  connected components.
\end{proof}

\begin{lemma}\label{two-ends}
  Suppose $G$ is a Borel graph on $X$ where every connected component of $G$ has
  two ends. Then Question~\ref{q1} has a positive answer for $G$.
\end{lemma}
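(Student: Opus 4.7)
The plan is to reduce, via Lemma~\ref{subgraph}, to a situation handled by Lemma~\ref{bounded_to_one}, by finding a Borel subset $A \subset X$ that intersects each $G$-component in a canonical ``spine.'' For each $x \in X$ and neighbor $y \in G(x)$, call $y$ an \emph{end-neighbor of $x$} if the connected component of $y$ in $G \setminus \{x\}$ is infinite; this is a Borel condition since $G$ has bounded degree (the condition can be expressed by the existence of $G$-paths of arbitrary length from $y$ avoiding $x$). Since each component of $G$ has exactly two ends, we use this notion to extract an axis.

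First I would try the naive definition $A = \{x : x \text{ has at least two end-neighbors}\}$; equivalently, $x \in A$ iff removing $x$ separates the component of $x$ into at least two infinite pieces, or $x$ itself lies on some bi-infinite geodesic between the two ends. A short argument using two-endedness shows that each $G$-component intersects $A$ in a connected, two-ended induced subgraph, with all ``finite branches'' pruned. For thin cases (components isomorphic to $\Z$ with bounded-size finite attachments) this $A$ already has bi-infinite-path components, and we would be essentially done. However, for ``thick'' two-ended graphs like a $\Z \times \{0,1\}$ ladder, every vertex has multiple end-neighbors and $A = X$, so the single thinning does nothing.

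To handle this, I would iterate: set $A_0 = X$, $G_0 = G$, and $A_{n+1} = \{x \in A_n : x \text{ has at least two end-neighbors in } G \restriction A_n\}$. Each $A_n$ is Borel, and $G \restriction A_n$ remains two-ended in every $G$-component (since we are only stripping finite attachments to the two-ended axis). Combined with the bounded-degree assumption, this process eventually drives the ``width'' of each component below any fixed threshold. The goal is to reach, after finitely many steps on each component, a Borel subset $A \subseteq X$ whose induced subgraph $G \restriction A$ has bi-infinite-path components, with each $G$-component still containing exactly one $(G \restriction A)$-component. By Lemma~\ref{subgraph}, Question~\ref{q1} for $G$ then reduces to Question~\ref{q1} for $G \restriction A$.

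For the pure bi-infinite-path graph $H = G \restriction A$, I would fix a Borel linear order $<$ on $X$ and define the Borel function $f \colon A \to A$ sending each $x$ to its $<$-least $H$-neighbor; then $f$ is at most $2$-to-$1$, and Lemma~\ref{bounded_to_one} gives a positive answer for $G_f$. A short marker-style argument then converts this into a positive answer for $H$ itself: the graphs $(G_f)_n$ produced by Lemma~\ref{bounded_to_one} differ from subgraphs of $H$ only at the edges we ``opened up'' when passing from $H$ to $G_f$, and re-adding those missing edges at each stage (keeping components finite by only joining components whose $G_f$-boundaries already coincide) gives the desired witness for $H$. The main obstacle is step one, namely showing that the iterated thinning $A_n$ stabilizes on each component to a bi-infinite-path structure rather than to some thicker residue; this requires leveraging both the bounded degree and two-endedness to rule out stably ``thick'' cores, and is the place where I would spend the most care.
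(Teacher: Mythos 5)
There is a genuine gap, and it is exactly where you said you would need to spend the most care: the claim that iterated pruning drives each two-ended component down to a bi-infinite path is false. Your own example, the ladder $\Z \times \{0,1\}$, is a \emph{fixed point} of the pruning: removing any single vertex leaves the component connected and infinite, so every vertex has at least two end-neighbors in $G \restriction A_n$ for every $n$, and $A_n = X$ for all $n$. Iteration gains nothing over a single pass, because the obstruction is not finite decorations hanging off an axis (which pruning does remove) but a ``thick'' two-ended core with no cut vertices at all; bounded degree and two-endedness do not rule this out, and more elaborate vertex-deletion schemes run into the same problem since such cores can be vertex-transitive. Consequently the reduction to a graph whose components are bi-infinite paths, which the rest of your argument (the $2$-to-$1$ function $f$, Lemma~\ref{bounded_to_one}, and the re-adding of omitted edges) rests on, is not established. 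A secondary, smaller issue is that the final patch converting a witness for $G_f$ into one for $H = G \restriction A$ is only sketched; the omitted edges of $H$ can occur unboundedly often along a line, so ``re-adding'' them while keeping components finite and sizes growing needs an actual marker/contraction argument, not just the remark given.

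The paper avoids the thickness problem by cutting with finite \emph{sets} rather than single vertices: it takes $Y$ to be the collection of finite connected sets $C$ whose removal splits the component into exactly two infinite pieces (these always exist in a locally finite two-ended component, even in the ladder), uses a countable Borel coloring of the intersection graph on $Y$ to extract a pairwise disjoint Borel family $Z \subset Y$ meeting every component, and then puts a graph $G'$ on $Z$ by joining two cuts when no third member of $Z$ separates them. This $G'$ has degree at most $2$, the regions between consecutive cuts are finite, so a positive answer for $G'$ lifts to one for $G$ (in the spirit of the graph-minor discussion before Lemma~\ref{bounded_to_one}), and the $2$-regular acyclic case is then handled with maximal Borel independent sets and the same contraction idea as Lemma~\ref{bounded_to_one}. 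If you replace your vertex-pruning step with this cut-set selection (or some other device that collapses thick cores, e.g.\ contracting the chosen cuts), the rest of your outline can be made to work.
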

\begin{proof}
Let $Y \subset [X]^{< \infty}$ be the collection of finite connected sets
$C \subset X$ such that removing $C$ from $G$ disconnects the connected
component containing $C$ into exactly two infinite pieces. Using a
countable Borel coloring of the intersection graph on $Y$ (see {\cite[Lemma
7.3]{KMi}} and {\cite[Proposition 2]{CM}}), we may find a Borel set $Z \subset
Y$ of pairwise disjoint subsets of $X$ which meets every connected
component of $G$. Let $G'$ be the graph on $Z$ where $C_0 \mathrel{G'} C_1$ if
$C_0$ and $C_1$ are in the same connected component of $G$ and there is no $D
\in Z$ such that removing $D$ from $G$ places $C_0$ and $C_1$ in different
connected components. The graph $G'$ has degree at most $2$, and it clearly
suffices to show that Question~\ref{q1} has a positive answer for $G'$
instead of $G$. 

Thus, we may restrict our attention just to $2$-regular
acyclic Borel graphs. 
However, this is trivial by using the existence of maximal Borel
independent sets {\cite[Proposition 4.6]{KST}} for such graphs, and the same idea as
Lemma~\ref{bounded_to_one}. 
\end{proof}

We are now ready to prove Proposition~\ref{q1_prop}.

\begin{proof}[Proof of Proposition~\ref{q1_prop}.]
Suppose $G$ is a hyperfinite bounded degree Borel graph on a standard Borel
space $X$, and $\mu$ is a Borel probability measure on $X$. Then
Question~\ref{q1} has a positive answer modulo a $\mu$-nullset, by using
Adams's end selection theorem {\cite[Lemma 3.21]{JKL}}, and the
Lemmas~\ref{subgraph}, \ref{bounded_to_one}, and \ref{two-ends}. 

Question~\ref{q1} has a positive answer modulo meager sets
via an easy Kuratowski-Ulam argument. 
\end{proof}

\end{document}